\documentclass[11pt,a4paper]{amsart}
\usepackage[T1]{fontenc}
\usepackage{lmodern,amsmath,amssymb,amsthm,mathtools}
\usepackage[margin=1in]{geometry}
\usepackage{microtype}
\usepackage[colorlinks=true,linkcolor=blue,citecolor=blue,urlcolor=blue]{hyperref}
\usepackage{enumitem}
\hypersetup{pdftitle={A nearly linear bound for the Lovász conjecture},pdfauthor={Bowen Li and Abhishek Methuku},pdfsubject={Long paths and cycles in connected vertex-transitive graphs}}
\setlist{itemsep=3pt,topsep=5pt}
\newtheorem{theorem}{Theorem}[section]
\newtheorem{lemma}[theorem]{Lemma}

\theoremstyle{definition}
\newtheorem{definition}[theorem]{Definition}
\theoremstyle{remark}
\newtheorem{remark}[theorem]{Remark}
\newcommand{\Cay}{\operatorname{Cay}}
\newcommand{\Aut}{\operatorname{Aut}}
\newcommand{\diam}{\operatorname{diam}}
\newcommand{\dist}{\operatorname{dist}}
\newcommand{\lpt}{\operatorname{lpt}}
\newcommand{\rank}{\operatorname{rank}}
\newcommand{\Z}{\mathbb Z}
\newcommand{\E}{\mathbb E}
\newcommand{\eps}{\varepsilon}
\newcommand{\normal}{\trianglelefteq}
\title[A nearly linear bound for the Lov\'asz conjecture]{A nearly linear bound for the Lov\'asz conjecture}
\author[Bowen Li]{Bowen Li$^{\star}$}
\address{$^{\star}$Department of Mathematics, University of Illinois Urbana--Champaign}
\email{bowenl6@illinois.edu}
\thanks{Bowen Li is supported by NSF grant RTG DMS-1937241, UIUC Campus Research Board RB24012, and University Block Grant Fellowship.}

\author[Abhishek Methuku]{Abhishek Methuku$^{\star}$}
\email{methuku@illinois.edu}
\thanks{Abhishek Methuku received funding from Simons award SFI-MPS-TSM-00025583 and the UIUC Campus Research Board Award RB25050.}
\date{18 September 2026}
\begin{document}
\begin{abstract}
The celebrated conjecture of Lov\'asz from 1969 asks whether every connected vertex-transitive graph has a Hamiltonian path. Buci\'c, Christoph, Pokrovskiy and Steiner recently proved that every such graph on \(n\) vertices contains a cycle of length \(n^{2/3-o(1)}\).

In this paper, we improve this bound to \(n^{1-o(1)}\). Our proof uses a structure theorem of Tessera and Tointon to first obtain a partition of the vertex set into sets of small diameter in the original graph. When the parts are large, we repeatedly traverse a spanning tree of maximum degree at most three in the quotient graph, and use the Lov\'asz local lemma to join random short paths along this traversal and extract a long path in the original graph. When the parts are small, we apply Babai's contraction lemma to reduce the problem to finding a long path in a connected Cayley graph of a nilpotent group with boundedly many generators and bounded nilpotency class, and then show that such a Cayley graph on \(m\) vertices contains a path on \(m^{1-o(1)}\) vertices.
\end{abstract}
\maketitle
\section{Introduction}\label{sec:intro}
A path or cycle in a graph is \emph{Hamiltonian} if it contains every vertex. Finding conditions that guarantee the existence of such paths and cycles is a classical problem in graph theory. Dirac's theorem~\cite{Dirac}, for example, states that every graph on \(n\ge3\) vertices with minimum degree at least \(n/2\) has a Hamiltonian cycle. A different possible condition is symmetry, which can also be present in graphs of small degree. An \emph{automorphism} of a graph is a permutation of its vertices such that two vertices are adjacent if and only if their images are adjacent. A graph \(X\), with vertex set \(V(X)\), is \emph{vertex-transitive} if, for any \(u,v\in V(X)\), some automorphism maps \(u\) to \(v\). Throughout, graphs are finite and undirected, with no loops or multiple edges unless explicitly stated otherwise.

In 1969, Lov\'asz~\cite{Lovasz} asked whether every connected vertex-transitive graph has a Hamiltonian path. This is now known as the Lov\'asz conjecture. A stronger conjecture, commonly attributed to Thomassen in 1978, asserts that, with finitely many exceptions, every connected vertex-transitive graph has a Hamiltonian cycle; see~\cite{Babai,NSTW}. The Petersen graph shows that exceptions are necessary for the cycle conjecture. These conjectures apply even to vertex-transitive graphs of degree three, for which conditions requiring large minimum degree give no information when the number of vertices is large.

For Cayley graphs, related questions go back to Rankin~\cite{Rankin} in 1948 and Rapaport-Strasser~\cite{Rapaport} in 1959; Rankin's work was motivated by change ringing. A Cayley graph has the elements of a finite group as vertices, with edges corresponding to multiplication by elements of a generating set closed under inverses. Every Cayley graph is vertex-transitive, and it is conjectured that every connected Cayley graph on at least three vertices has a Hamiltonian cycle. We refer to Kutnar and Maru\v si\v c~\cite{KM} for a survey of Hamiltonian paths and cycles in vertex-transitive graphs.

The conjectures have been proved in several important cases. Christofides, Hladk\'y and M\'ath\'e~\cite{CHM} showed that, for every fixed \(\alpha>0\), every sufficiently large connected vertex-transitive graph on \(n\) vertices with degree at least \(\alpha n\) has a Hamiltonian cycle. Bedert, Dragani\'c, M\"uyesser and Pavez-Sign\'e~\cite{BDMP} proved that there is an absolute constant \(\delta>0\) such that every sufficiently large connected Cayley graph on \(n\) vertices with degree greater than \(n^{1-\delta}\) has a Hamiltonian cycle. For random Cayley graphs, Dragani\'c, Montgomery, Munh\'a Correia, Pokrovskiy and Sudakov~\cite{DMMPS} proved the following: for a sufficiently large absolute constant \(C\), if \(G\) is a group of order \(n\) and \(S\) is chosen uniformly at random among its subsets of size \(d\ge C\log n\), then the Cayley graph generated by \(S\) and the inverses of its elements has a Hamiltonian cycle with probability tending to one as \(n\to\infty\).

Other results establish the conjectures for specific families of graphs. For integers \(k\ge1\) and \(m\ge2k+1\), the Kneser graph \(K(m,k)\) has the \(k\)-element subsets of an \(m\)-element set as vertices, with two vertices adjacent if the corresponding sets are disjoint. Merino, M\"utze and Namrata~\cite{MMN} proved that every such graph has a Hamiltonian cycle, except for the Petersen graph \(K(5,2)\). Brada\v c and Janzer~\cite{BJ} use expansion methods to obtain further results on the Hamiltonicity of Cayley and Kneser graphs, including versions that allow random edge deletions.

More recently, Christoph, Hunter and Sudakov~\cite[Theorem~1.8]{CHS} proved that every connected vertex-transitive graph on \(n\) vertices with degree \(d\) satisfying \(d/(\log^3 n\,\log\log n)\to\infty\) as \(n\to\infty\) contains a cycle, and hence a path, with \(n-o(n)\) vertices. For arbitrary connected vertex-transitive graphs, much less is known.

For a nonempty graph \(X\), let \(p(X)\) be the largest number of vertices in a path in \(X\), and let \(c(X)\) be the maximum length of a cycle in \(X\), with \(c(X)=0\) when \(X\) has no cycle. For \(n\ge3\), define
\[
 f(n)=\min\{c(X): X\text{ is a connected vertex-transitive graph on }n\text{ vertices}\}.
\]
Thus Thomassen's conjecture on Hamiltonian cycles in connected vertex-transitive graphs asserts that \(f(n)=n\) for all sufficiently large \(n\).

In 1979, Babai~\cite{Babai} proved that \(f(n)=\Omega(\sqrt n)\). This remained the best general lower bound for more than forty years. DeVos~\cite{DeVos} improved it to \(\Omega(n^{3/5})\) in 2023, followed by the bound \(\Omega(n^{13/21})\) of Groenland, Longbrake, Steiner, Turcotte and Yepremyan~\cite{GLSTY}. Norin, Steiner, Thomass\'e and Wollan~\cite{NSTW} then obtained \(\Omega(n^{9/14})\). Most recently, Buci\'c, Christoph, Pokrovskiy and Steiner proved the following.

\begin{theorem}[Buci\'c, Christoph, Pokrovskiy and Steiner~\cite{BCPS}]\label{thm:BCPS}
For every \(\eps>0\), every sufficiently large connected vertex-transitive graph on \(n\) vertices contains a cycle of length at least \(n^{2/3-\eps}\).
\end{theorem}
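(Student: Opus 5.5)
The theorem is cited from~\cite{BCPS}, so the paper does not prove it. A proof would likely follow the template of Babai's argument, sharpened by structural input. I would keep three ingredients from that line of work. The first is \emph{Babai's intersection lemma}: in a connected vertex-transitive graph, any two longest cycles meet. More generally, for a longest cycle \(C\) of length \(c\) and any vertex set \(A\), vertex-transitivity plus double counting shows that some automorphic copy of \(C\) meets \(A\) in at least \(c|A|/n\) vertices. The second is the expansion or diameter dichotomy. The third is a polynomial-growth structure theorem for vertex-transitive graphs of small diameter, for instance Breuillard--Green--Tao, or Tessera--Tointon as used later in this paper.

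\textbf{Step 1 (translates of a long cycle).} Suppose \(c(X)=c<n^{2/3-\eps}\). Fix a longest cycle \(C\) and its images under \(\Aut(X)\). By the averaging above, any two translates of \(C\) intersect, and a typical translate meets any fixed set \(A\) in about \(c|A|/n\) vertices. If two longest cycles share \(k\) vertices that are well spread along both cycles, the standard rerouting argument produces a longer cycle. So longest cycles must cross each other only in a highly clustered way.

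\textbf{Step 2 (expansion versus growth).} Let \(B(r)\) be the ball of radius \(r\). In the first case, balls grow fast, say \(|B(r)|\ge r^{K}\) at some scale \(r\approx n^{1/3}\). Then short paths are plentiful, and one can link many vertex-disjoint segments of \(C\) together through short detours. Each detour has length \(O(r)\) and costs little, so concatenating them contradicts maximality of \(c\). In the second case, growth is polynomial at that scale, and the structure theorem gives a quotient onto a Cayley graph of a virtually nilpotent group with bounded rank and step. In the abelian or nilpotent quotient one builds a long cycle directly, for example by snake-like traversal of boxes in a grid-like structure. One then lifts it through fibres of small diameter, each of size roughly \(n^{1/3}\). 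The loss in this lifting is where the exponent \(2/3\) arises: a lifted cycle visits each fibre only on a path of length comparable to the fibre diameter, not the fibre size.

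\textbf{Main obstacle.} The hard step is the intermediate regime, where growth is neither clearly polynomial nor clearly expanding at the chosen scale. Balancing the scale \(r\) between the two cases is what fixes the exponent. I would handle it by choosing the scale via a pigeonhole over dyadic radii, so that \(|B(2r)|\le 2^{K}|B(r)|\) at some \(r\) in the relevant window. I would then apply the Tessera--Tointon theorem at that scale and absorb the losses into the \(n^{-\eps}\) factor.
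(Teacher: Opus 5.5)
The paper does not prove this statement. It is cited from Buci\'c, Christoph, Pokrovskiy and Steiner, whose route goes through longest-path transversals (\(\lpt(X)\le\ell^{1/2+o(1)}\) combined with \(n\le p(X)\lpt(X)\)), and it also follows from Theorem~\ref{thm:main}. Measured against either, your proposal has genuine gaps, because each step that carries the argument is asserted rather than proved.

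\textbf{Step~1.} It is never used later. Its rerouting principle is also false as stated: two Hamiltonian cycles share all \(n\) vertices, as well spread as you like, yet no longer cycle exists.

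\textbf{The fast-growth case.} You give no mechanism that makes the detours pairwise disjoint and disjoint from the retained segments of \(C\), and no count showing the result is longer than \(C\). Taken literally, for fixed \(K>3\) and \(r\approx n^{1/3}\), this case just says \(\diam(X)\le r\). Lemma~\ref{lem:diameter}, together with Watkins's theorem and Bondy--Locke, settles that at once.

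\textbf{The intermediate regime.} There is nothing to balance here. The finite Tessera--Tointon theorem (Theorem~\ref{thm:TT}) applies to every large connected vertex-transitive graph with no growth or doubling hypothesis. So what you call the main obstacle is not where the difficulty lies.

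The missing ideas are all in the structured case, which after Theorem~\ref{thm:TT} is the whole problem.

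\textbf{Large fibres.} The fibres (the \(H\)-orbits of Theorem~\ref{thm:TT}) have diameter at most \(n^{\lambda}\) in \(X\). Nothing controls their common size \(s\), and in particular nothing makes it about \(n^{1/3}\). Lifting a quotient path gains only one vertex per orbit. So when \(s\) is large you must pass through each orbit many times along short connecting paths. These paths may leave the orbit, since orbits need not induce connected subgraphs, and they may collide with one another. The paper handles this as follows:
\begin{enumerate}
\item it repeatedly traverses a spanning tree of maximum degree at most three in \(X/H\);
\item it joins the visits by random shortest routes, whose load at each vertex is bounded using vertex-transitivity (Lemma~\ref{lem:traffic});
\item it applies the local lemma, obtaining \(p(X)\ge n/(1000R^2)\) with \(R=n^{\lambda}+1\) (Lemma~\ref{lem:largefibres}).
\end{enumerate}
Your claim that a lift runs through each fibre along a path of length comparable to its diameter has no argument of this kind behind it. It is also exactly where your exponent \(2/3\) is asserted rather than derived.

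\textbf{Small fibres.} Here you need a near-linear path in \(X/H\), and a direct construction is available only in the abelian case. Hamiltonicity of connected Cayley graphs of nilpotent groups is open in general. Moreover, \(X/H\) is not even a Cayley graph: it only carries a free action, with boundedly many orbits, of a nilpotent group of bounded rank and class. The paper needs three tools to reach \(m^{1-o(1)}\) for a nilpotent Cayley graph on \(m\) vertices (Theorem~\ref{thm:nilpotent}):
\begin{enumerate}
\item Babai's contraction lemma (Theorem~\ref{thm:BabaiContraction});
\item lifting through central cyclic quotients (Lemma~\ref{lem:shortcentral});
\item short generators of the lower central factors (Lemma~\ref{lem:shortgenerators}).
\end{enumerate}

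\textbf{Cycles versus paths.} Only paths lift directly from \(X/H\) to \(X\). The paper works with paths throughout and converts to a cycle at the end via Watkins's theorem and Bondy--Locke.
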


Buci\'c et al.~\cite{BCPS} explain why \(f(n)=\Omega(n^{2/3})\) is a natural target for two approaches from earlier work. Their theorem gives an asymptotic version of this bound, namely \(f(n)\ge n^{2/3-o(1)}\). Both approaches use results about longest paths or cycles in general graphs to obtain lower bounds in vertex-transitive graphs. We describe them briefly to explain the significance of this bound.

\noindent\textbf{Longest-path transversals.} Gallai~\cite{Gallai} asked in 1966 whether all longest paths in a connected graph have a common vertex. Walther~\cite{Walther} answered this in the negative in 1969, but it remains open whether an absolute constant number of vertices always suffices to meet every longest path; see~\cite{NSTW}. Such a set is called a \emph{longest-path transversal}, and \(\lpt(X)\) denotes its minimum possible size in a graph \(X\).

Norin, Steiner, Thomass\'e and Wollan~\cite{NSTW} proved that
\[
 \lpt(X)\le\sqrt{8|V(X)|}
 \qquad\text{and}\qquad
 \lpt(X)=O(\ell^{5/9}),
\]
where \(X\) is connected and \(\ell\ge1\) is the number of edges in a longest path. Buci\'c et al.~\cite[Theorem~1.2]{BCPS} improved the second bound to \(\lpt(X)\le\ell^{1/2+o(1)}\). For a vertex-transitive graph \(X\) on \(n\) vertices, a simple averaging argument gives~\cite[Proposition~5.1]{GLSTY}
\begin{equation}\label{eq:transversal}
 n\le p(X)\lpt(X).
\end{equation}
The bound of Buci\'c et al. now gives \(n\le p(X)^{3/2+o(1)}\), and hence \(p(X)\ge n^{2/3-o(1)}\).

\noindent\textbf{Intersections of longest cycles.} Smith's conjecture states that, for \(k\ge2\), any two longest cycles in a \(k\)-connected graph have at least \(k\) common vertices; see~\cite{MZ}. Here a graph is \(k\)-connected if it has at least \(k+1\) vertices and remains connected after deleting fewer than \(k\) vertices. The connection to bounds for \(f(n)\) uses a related question about separating two longest cycles. A set separates two cycles if it contains their common vertices and its deletion leaves no path between their remaining vertices. Groenland et al.~\cite{GLSTY} obtained a separating set of size \(O(m^{8/5})\) when the cycles have \(m\ge1\) common vertices. Ma and Zhao~\cite{MZ} improved this to \(O(m^{3/2})\).

More generally, for fixed \(\alpha\ge1\), a bound of \(O(m^\alpha)\) for the size of such a separating set, valid for every pair of longest cycles with \(m\ge1\) common vertices, gives
\[
 f(n)=\Omega\bigl(n^{(1+\alpha)/(1+2\alpha)}\bigr);
\]
see~\cite{GLSTY,BCPS}. Since every such separating set contains the \(m\) vertices common to the two longest cycles, the smallest possible value of \(\alpha\) is \(1\). Even this would give only \(f(n)=\Omega(n^{2/3})\).

Buci\'c et al. obtain their bound through the longest-path transversal approach. Their key step concerns a cycle of length \(2k\) together with a perfect matching between its two consecutive blocks of \(k\) vertices: they find a path using \(k^{1-o(1)}\) matching edges. Their proof builds on the expander decomposition lemma of Letzter, Methuku and Sudakov~\cite[Lemma~4.1]{LMS} and other tools for expanders building on work of Buci\'c and Montgomery~\cite{BM}, which in turn uses ideas of Tomon~\cite{Tomon}.

In this paper, we exploit the structure of vertex-transitive graphs directly to break the \(n^{2/3}\) barrier, improving the lower bound for both paths and cycles from \(n^{2/3-o(1)}\) to a near-linear bound of \(n^{1-o(1)}\).

\begin{theorem}\label{thm:main}
For every \(\eps>0\), every sufficiently large connected vertex-transitive graph on \(n\) vertices contains a cycle of length at least \(n^{1-\eps}\).
\end{theorem}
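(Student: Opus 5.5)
We follow the strategy in the abstract. First we reduce from cycles to paths. In a connected vertex-transitive graph, any two longest paths intersect (and similarly for longest cycles). Together with the standard argument that a vertex-transitive graph with a path on \(p\) vertices has a cycle of length polynomially related to \(p\), this reduces the problem to finding a path on \(n^{1-\eps/2}\) vertices. Concretely, we use the known fact that \(c(X)\ge p(X)^{1-o(1)}\) in this setting, or we run the construction below so that it directly closes into a cycle; this costs only a factor \(n^{o(1)}\).

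Fix \(\eps\) and a scale \(R=n^{\delta}\) with \(\delta=\delta(\eps)\) small. We apply the Tessera--Tointon structure theorem to the connected vertex-transitive graph \(X\) at scale \(R\). This yields one of two outcomes. In the first, balls of radius \(R\) grow fast (polynomially in \(R\) with large exponent). In the second, there is a normal-subgroup-type quotient: \(V(X)\) is partitioned into blocks \(B_1,\dots,B_m\) of diameter at most \(R^{O(1)}\) in \(X\), and the quotient graph \(Y\) is a vertex-transitive graph whose automorphism group is virtually nilpotent with bounded rank and class. We then split according to the block size \(|B_i|=b\).

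\textbf{Large parts} (\(b\ge n^{\delta'}\)). The quotient \(Y\) is connected, so it has a spanning tree of maximum degree at most three, for instance via a Hamiltonian-path-like argument, or via Win's theorem when the toughness is bounded; otherwise we use a spanning walk. We traverse this tree by a closed walk that visits each block \(O(1)\) times. We then perform about \(b/\mathrm{poly}(R)\) rounds. In each round we pick, for every block visited, a uniformly random vertex, and we join consecutive chosen vertices by short paths of length \(\le R^{O(1)}\) inside \(X\), using the small diameter of the blocks and the adjacency of blocks. Vertex-transitivity makes each vertex lie on a random short path with probability \(\le R^{O(1)}/b\). Collisions between paths in different rounds and positions are then sparse dependent bad events, and the Lov\'asz local lemma produces a choice with all segments vertex-disjoint. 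Concatenating the rounds gives a path on roughly \(m\cdot b/R^{O(1)}=n^{1-O(\delta)}\) vertices. The main technical step here is bounding the dependency degree so that the local lemma applies with only \(R^{O(1)}\) loss.

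\textbf{Small parts} (\(b<n^{\delta'}\)). Here we contract the blocks and use Babai's contraction lemma, which says that a long path in the quotient lifts to a path in \(X\) of comparable length up to a factor \(b\). This reduces the problem to a connected Cayley graph of a nilpotent group \(G\) with \(O(1)\) generators and bounded class, on \(m\ge n^{1-\delta'}\) vertices. For that graph we induct on the nilpotency class. Take the last nontrivial term \(Z\) of the lower central series, which is central, and consider \(G/Z\). By induction, \(\Cay(G/Z)\) has a path on \((m/|Z|)^{1-o(1)}\) vertices. We lift it through cosets of \(Z\), which is abelian and boundedly generated by commutators of bounded length. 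In the abelian base case, a boundedly generated abelian Cayley graph is essentially a bounded-dimensional torus, which has a Hamiltonian path; we use this to snake through each coset, with commutator words supplying moves inside \(Z\). Iterating the lift inside \(G\) loses a factor \(m^{o(1)}\) per level, and there are boundedly many levels. We expect this lifting step to be the main obstacle: a snake through a fiber must exit at a point compatible with the next coset, and the commutator moves have length \(O(1)\) but may revisit vertices. We would first try to handle this by snaking on a coarse sublattice of \(Z\) of index \(m^{o(1)}\).

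Finally, optimizing \(\delta,\delta'\) in terms of \(\eps\) gives Theorem~\ref{thm:main} for \(n\) sufficiently large.
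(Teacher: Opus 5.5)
\textbf{Main gap: the nilpotent lifting step.} The small-orbit case fails at exactly the step you flag as the main obstacle. Nothing in the proposal shows that lifting from \(\Cay(G/Z,S_Z)\) to \(\Cay(G,S)\), with \(Z=\gamma_c(G)\), loses only a factor \(m^{o(1)}\). Snaking through a coset using commutator words as moves inside \(Z\) does not work as stated. Such a word projects to a closed walk in the quotient that passes through other cosets, so its lifts can collide with the rest of the lifted path. The coarse-sublattice suggestion is not developed into anything that controls these collisions.

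The paper's remedy is a quantitative Factor Group Lemma:
\begin{itemize}
\item It carries cycles, not paths, through the induction, via \(\rho=c/|V|\); repeating closed walks and the ear argument both need a cycle in the quotient.
\item It refines \(Z\) into successive cyclic central kernels \(\langle z\rangle\) and proves \(\rho(X)\ge2^{-\ell}\rho(Q)\) whenever \(z\) is a product of at most \(\ell\) elements of \(S\) (Lemma~\ref{lem:shortcentral}).
\item The projected word is split into simple labelled cycles. For such a cycle \(C\) whose voltage generates the kernel, a Menger argument applied to \(C\) and a longest cycle of \(Q\) gives a \(C\)-ear \(P\) with \(|E(P)|\ge c(Q)/|C|\).
\item The translates of a lift of \(P\) are vertex-disjoint, and lifts of the two arcs of \(C\) join their ends. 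Lemma~\ref{lem:cyclicmatching} then keeps at least half of these translates on one cycle.
\end{itemize}
You also need short generators for \(Z\). Your claim that \(Z\) is generated by boundedly many commutators of bounded length fails in the form required. The generators must be short \(S\)-words, and when \(S\) is spread over the Sylow subgroups, on the order of \(\nu(|G|)\) of them can be necessary. The paper uses the Burnside basis theorem to choose at most \(r\) elements of \(S\) per prime (Lemma~\ref{lem:shortgenerators}). The total loss is then \(2^{K_{r,c}\nu(|G|)}\), which is \(m^{o(1)}\) only because \(\nu(m)=o(\log m)\).

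\textbf{Further gaps.} The reduction to a Cayley graph is also unjustified. A virtually nilpotent automorphism group does not suffice. Babai's contraction lemma needs a group acting \emph{freely} on \(Q\), and \(|N|\ge q/I\) needs boundedly many \(N\)-orbits. This free action with boundedly many orbits is exactly what Theorem~\ref{thm:TT}(ii) supplies, via the Spanos--Tointon refinement. Moreover, Babai's lemma is not a lifting statement with a loss factor \(b\). It partitions \(V(Q)\) into connected sets whose contraction is a Cayley graph \(Y\) of \(N\), giving \(p(Q)\ge p(Y)\). Lifting from \(Q=X/H\) to \(X\) then loses nothing.

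Your fast-growth outcome is never treated. The paper's finite form has no such alternative; its one extra case is a single orbit (\(q=1\), so \(\diam(X)\le n^{\lambda}\)), handled by Lemma~\ref{lem:diameter}. Your random construction, run inside one block, could also cover it, but you should say so.

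Your large-block case matches the paper in outline, but three points need repair:
\begin{itemize}
\item The degree-three spanning tree comes from Win's theorem together with 1-toughness of connected vertex-transitive graphs, which follows from Mader's theorem. A generic spanning walk would ruin the \(O(1)\) visits per block.
\item The local lemma cannot rest on a small dependency degree, since each collision event shares variables with linearly many others. You need the probability-sum form together with the aggregate bound \(\sum_B\mu_B(v)\le R/s\) of Lemma~\ref{lem:traffic}. This is why the paper joins consecutive blocks by a uniformly random edge and uses random routes inside single blocks.
\item Consecutive routes may meet, so a simple path must be extracted (the label argument) rather than concatenated.
\end{itemize}
Finally, a merely polynomial relation between paths and cycles is too lossy. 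Use Watkins's theorem with Bondy--Locke, \(c(X)\ge\frac25(p(X)-1)\).
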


Our proof uses the structure theorem of Tessera and Tointon~\cite{TT} to partition the vertex set into parts of small diameter in the original graph, in such a way that every automorphism permutes the parts. Let \(Q\) be the quotient graph whose vertices are these parts, with two distinct parts adjacent when an edge of the original graph joins them. The graph \(Q\) is vertex-transitive and has a spanning tree of maximum degree at most three.

When there is only one part, a standard bound relating diameter to longest-path length gives the required path (Lemma~\ref{lem:diameter}).

When the parts are large and there are at least two of them, we choose a walk consisting of repeated traversals of the spanning tree, each using every edge once in each direction. The degree bound in the spanning tree ensures that each traversal visits any part at most four times. We follow this walk using randomly chosen edges between parts and random short paths with both endpoints in the visited part. These short paths may leave their parts. Vertex-transitivity gives a uniform bound on the expected number of these paths through any vertex. We use this bound and the Lov\'asz local lemma to control intersections and extract a long simple path.

When the parts are small, the number of parts is large enough that a path of near-linear length in \(Q\) yields the desired bound. Indeed, every path in \(Q\) lifts to a path with the same number of vertices in the original graph. Using the additional group structure supplied by the Tessera--Tointon theorem, we apply Babai's contraction lemma~\cite{BabaiContraction} to reduce this task to finding a long path in a connected Cayley graph of a nilpotent group with boundedly many generators and bounded nilpotency class. We then show that such a Cayley graph on \(m\) vertices contains a path on \(m^{1-o(1)}\) vertices (Theorem~\ref{thm:nilpotent}). Since \(m\) is at least a fixed positive proportion of \(|V(Q)|\), this yields the desired long path in \(Q\).

\subsection{Structure of the paper}
Section~\ref{sec:overview} gives an overview of the proof, and Section~\ref{sec:prelim} collects notation and preliminaries. Section~\ref{sec:routes} constructs a long path when the orbits of a normal subgroup are large, by repeatedly traversing a spanning tree in the quotient and using the Lov\'asz local lemma to join random short paths along this traversal. In Section~\ref{sec:cyclic}, we prove a quantitative variant of the Factor Group Lemma~\cite[Section~2.2]{WitteGallian} for quotients by cyclic subgroups of the centre, with the loss in the proportion of vertices covered bounded in terms of the word length of a generator of the subgroup. Section~\ref{sec:nilpotent} applies this construction by induction on nilpotency class to prove the near-linear bound for nilpotent Cayley graphs. Finally, Section~\ref{sec:assembly} applies the structure theorem of Tessera and Tointon~\cite{TT}, uses Babai's contraction lemma for the small-orbit case, and combines the bounds to prove Theorem~\ref{thm:main}.

\section{Proof overview}\label{sec:overview}
Fix \(0<\eps<1\), and let \(X\) be a connected vertex-transitive graph on \(n\) vertices, where \(n\) is sufficiently large in terms of \(\eps\). Our aim is to find a path with at least \(n^{1-\eps}\) vertices. We begin by using the finite form of the Tessera--Tointon structure theorem stated in Theorem~\ref{thm:TT} to partition the vertex set into sets of diameter at most \(n^{\eps/8}\), with distances measured in \(X\).

The parts are the orbits of a normal subgroup \(H\) of the automorphism group of \(X\): the orbit of a vertex \(v\) is \(Hv=\{hv:h\in H\}\). Here normality means that \(gHg^{-1}=H\) for every automorphism \(g\) of \(X\). Every automorphism therefore permutes the orbits, and vertex-transitivity makes their sizes equal. Write \(s\) for this common size. The quotient graph \(Q=X/H\) has one vertex for each orbit, with two distinct orbits adjacent when an edge of \(X\) joins them. It is again connected and vertex-transitive. If \(q\) is its number of vertices, then \(n=qs\).

We first consider the case of a single orbit, where a diameter bound gives the required long path. With multiple orbits, small diameter does not guarantee that an orbit induces a connected subgraph: short paths between its vertices may leave the orbit. When the orbits are large, we follow repeated traversals of a spanning tree of the quotient, using random short paths and controlling their intersections. When they are small, the quotient has many vertices, and we use the additional group structure supplied by the Tessera--Tointon theorem to find a long path in the quotient.

\subsection{A single orbit: the diameter bound}\label{sec:overviewsingle}
If \(q=1\), the diameter \(D\) of \(X\) is at most \(n^{\eps/8}\). Babai and Szegedy~\cite{BabaiSzegedy} show that, for every \(A\subseteq V(X)\) with \(1\le |A|\le n/2\), at least \(|A|/(D+1)\) vertices outside \(A\) have a neighbour in \(A\). We combine this expansion estimate with a standard result on finding long paths in expanders~\cite[Proposition~7.1]{Krivelevich} to obtain a path containing at least
\[
 \frac{n}{2(D+1)}\ge\frac{n^{1-\eps/8}}{4}\ge n^{1-\eps}
\]
vertices for sufficiently large \(n\); see the proof of Lemma~\ref{lem:diameter} for details.

\subsection{Large orbits: joining random paths}\label{sec:overviewroutes}
Suppose \(q\ge2\) and \(s\ge n^{\eps/2}\). We seek a long path by visiting each orbit many times, using a short path between two of its vertices at each visit. Theorems of Mader~\cite{Mader} and Win~\cite{Win} give a spanning tree of \(Q\) of maximum degree at most three (see Lemma~\ref{lem:tree}). Choose a closed walk in this tree that traverses every edge exactly once in each direction. Repeat this closed walk \(t\) times, where the positive integer \(t\) will be chosen below. The resulting walk through \(Q\) visits each orbit at most \(3t+1\) times.

For each step of this walk in \(Q\), independently choose an edge of \(X\) uniformly at random between the two corresponding orbits. At each intermediate visit, join the endpoint of the incoming edge in the visited orbit to the endpoint of the outgoing edge in that orbit. At the first and last visits, choose an initial and a terminal vertex uniformly at random from the respective orbits, independently of one another and of all edges, and join them to the endpoints in those orbits of the first and last chosen edges, respectively.

Each chosen edge has a uniformly distributed endpoint in each of its two orbits. Indeed, this is because \(H\) preserves both orbits and acts transitively on each, so every vertex in one orbit has the same number of neighbours in the other. At an intermediate visit, the two endpoints to be joined come from independent choices of the incoming and outgoing edges. They are therefore independent, even when these two steps traverse the same pair of orbits. The independent choices of the initial and terminal vertices give the same conclusion at the first and last visits.

Once the connecting edges and the initial and terminal vertices have been chosen, the two endpoints at every visit are fixed. At each visit, choose a path uniformly at random from all shortest paths in \(X\) between its two endpoints. Make these path choices independently for different visits. Since both endpoints lie in an orbit of diameter at most \(n^{\eps/8}\) in \(X\), each chosen path has at most \(n^{\eps/8}\) edges. We write \(R=n^{\eps/8}+1\) for the resulting bound on its number of vertices. Paths from nonconsecutive visits depend on disjoint random choices and are therefore independent.

The main difficulty is that these paths may intersect, even when their endpoints lie in different orbits. We use vertex-transitivity to bound how many of the random paths are expected to contain a given vertex. If we choose one such random path for each orbit, this expectation is the same for every vertex of \(X\). There are \(q\) paths, each with at most \(R\) vertices, so the expectation is at most \(qR/n=R/s\). Since each orbit occurs at most \(3t+1\) times in our repeated walk, the expected number of chosen paths containing any given vertex is at most \((3t+1)R/s\). By independence, summing this bound over the at most \(R\) vertices of the path at a fixed visit shows that the sum of its intersection probabilities with paths from nonconsecutive visits is at most \((3t+1)R^2/s=O(tR^2/s)\).

Each short path depends only on its own path choice and the adjacent connecting-edge choices, with the choice of the initial or terminal vertex replacing the missing edge choice at the first or last visit. The Lov\'asz local lemma therefore allows us to take \(t\) to be a sufficiently small constant multiple of \(s/R^2\) and avoid all intersections between paths from nonconsecutive visits. Label each vertex by the first visit whose short path contains it. In the subgraph consisting of these paths and the chosen connecting edges, an edge changes this label by at most two. A path from the initial to the terminal vertex therefore has at least \(t(q-1)\) vertices; see the proof of Lemma~\ref{lem:largefibres}. This yields a path with
\[
 \Omega(qt)=\Omega(n/R^2)=\Omega(n^{1-\eps/4})
\]
vertices, which exceeds \(n^{1-\eps}\) for sufficiently large \(n\).

\subsection{Small orbits: a long quotient path}\label{sec:overviewactions}
Suppose \(q\ge2\) and \(s<n^{\eps/2}\), so \(q>n^{1-\eps/2}\). A path in the quotient \(Q=X/H\) can be lifted to a path in \(X\) by choosing adjacent vertices in successive orbits. This is possible from any starting vertex because every vertex of an orbit has a neighbour in each adjacent orbit. Thus it is enough to find a path in \(Q\) with \(q^{1-o(1)}\) vertices.

To find a long path in \(Q\), we use the additional group structure supplied by the Tessera--Tointon theorem. The centre of a group consists of the elements commuting with every group element. A group is nilpotent if repeatedly replacing it by its quotient by the centre eventually gives the trivial group; the least number of steps required is its nilpotency class. The Tessera--Tointon theorem gives a nilpotent group \(N\le\Aut(Q)\), with boundedly many generators and bounded nilpotency class, acting freely with boundedly many orbits on \(V(Q)\). Here freeness means that only the identity fixes a vertex; all these bounds depend only on \(\eps\).

Our strategy is to construct a connected Cayley graph of \(N\) such that every path in it gives a path in \(Q\) with at least as many vertices. We then apply Theorem~\ref{thm:nilpotent}, which shows that every connected Cayley graph of a nilpotent group of order \(m\), with boundedly many generators and bounded class, has a path containing \(m^{1-o(1)}\) vertices.

Since \(N\) acts freely on \(Q\), each orbit under this action has \(|N|\) vertices. If there is a single orbit, then \(Q\) is a Cayley graph of \(N\). If there are several, \(Q\) has more than \(|N|\) vertices and cannot be a Cayley graph of \(N\). Babai's contraction lemma~\cite{BabaiContraction}, stated as Theorem~\ref{thm:BabaiContraction}, supplies a partition of \(V(Q)\) into nonempty connected sets such that contracting these parts gives a connected Cayley graph \(Y\) of \(N\).

Contracting connected parts cannot increase the maximum order of a path: \(p(Q)\ge p(Y)\). Indeed, a path in \(Y\) can be expanded by choosing edges between successive parts and joining their endpoints within each intermediate part. Since the parts are connected and disjoint, this gives a simple path in \(Q\) with at least as many vertices; see Remark~\ref{rem:contractionpaths}.

The graph \(Y\) has \(m=|N|\) vertices. The number of \(N\)-orbits in \(Q\) is bounded in terms of \(\eps\), so \(m\) is at least a fixed positive fraction of \(q\), depending only on \(\eps\). Theorem~\ref{thm:nilpotent} therefore gives a path in \(Y\), and hence in \(Q\), with \(q^{1-o(1)}\) vertices. Lifting this path from \(Q\) to \(X\) preserves its number of vertices. Our assumption \(s<n^{\eps/2}\) gives \(q=n/s>n^{1-\eps/2}\). Thus the resulting path in \(X\) has at least \(n^{1-\eps}\) vertices for sufficiently large \(n\), as desired.

To pass from paths to cycles, we use a theorem of Bondy and Locke~\cite{BL}, which gives a cycle with at least two fifths as many edges as a longest path in any 3-connected graph. Watkins's theorem~\cite{Watkins} guarantees 3-connectivity unless \(X\) is already a cycle. This shows that \(X\) contains a cycle of length \(n^{1-o(1)}\), as desired.

\vspace{8pt}
\noindent\emph{Long cycles in nilpotent Cayley graphs.}
We show that every connected Cayley graph of a nilpotent group
\(N\) of order \(m\ge3\), with boundedly many generators and
bounded nilpotency class, contains a cycle, and hence a path,
with \(m^{1-o(1)}\) vertices. We form a sequence of quotient
groups, at each nonabelian stage quotienting by the last
nontrivial term of the lower central series. This subgroup
lies in the centre, and the quotient has smaller nilpotency
class, so the sequence ends at an abelian group. We then further refine
each step into successive quotients by cyclic subgroups of
the centre. The Cayley graph of the final abelian quotient
has a Hamiltonian cycle~\cite{Marusic}. Working backwards
through the sequence, we lift this cycle to a long cycle
in the original graph, controlling the loss in the proportion
of vertices covered at each step.

The main task is to show that the loss at each step can be
controlled by the length of a word representing a generator
of the central cyclic subgroup. More precisely, suppose
\(H=\langle z\rangle\) lies in the centre and \(z\) is a product
of at most \(\ell\) prescribed generators. Provided the quotient
Cayley graph has at least three vertices, we seek to turn a
cycle of length \(k\) in it into a cycle of length at least
\(2^{-\ell}|H|k\) in the original graph. Since the original
graph has \(|H|\) times as many vertices, this would bound
the loss in the proportion covered by a factor \(2^\ell\),
independently of the order of \(H\). We establish this in
Lemma~\ref{lem:shortcentral}.

The proof of Lemma~\ref{lem:shortcentral} builds on a simple
observation about concatenating translates of a long path.
Suppose we have a path of length at least two from \(1\) to
\(z\) whose projection to the quotient visits no vertex twice
except at its endpoints. Together with the original path,
its translates by \(z,z^2,\ldots,z^{|H|-1}\) can be concatenated
to form a cycle visiting every vertex above the projected cycle,
since \(z\) generates \(H\).
Thus, if the projection covers a positive proportion of the
quotient, the resulting cycle covers the same proportion of
the original graph. This idea underlies the classical Factor
Group Lemma~\cite[Section~2.2]{WitteGallian}, which applies
when the projected cycle is Hamiltonian
(see also~\cite{GM}).

However, in general, the product of the generator labels around
a long quotient cycle need not generate \(H\).
To carry out this idea, we use the short word representing
\(z\) to connect long paths obtained from the quotient cycle.
Suppose first that this word traces a simple labelled cycle
\(C\) of length at least two in the quotient.
If the given quotient cycle is no longer than \(C\),
concatenating lifts of \(C\) already suffices.
When the given quotient cycle is longer than \(C\),
an argument using Menger's theorem supplies a long path \(P\)
with two distinct endpoints on \(C\) and all other vertices
outside \(C\) (Lemma~\ref{lem:longear}). Fixing the generator
labels along \(P\), we lift it to the original graph and
translate this lift by the elements of \(H\). This gives
disjoint copies of \(P\). All their starting vertices map
to one endpoint of \(P\) in the quotient, and all their
ending vertices map to the other. Since these endpoints
are distinct, no copy ends where another begins.
The two paths along \(C\) between the endpoints of \(P\)
provide the connections. Using lifts of one joins the copies of
\(P\) into separate cycles; using lifts of the other allows us to
combine these into one cycle containing at least half
of the copies (Lemma~\ref{lem:thetalifting}). The fact that
the word around \(C\) represents the generator \(z\) ensures
that these connections link all the separate cycles.

Words of length one can be handled directly.
If the short word revisits quotient vertices before returning
to its start, we split the walk at repeated vertices and
apply the lifting bound of Lemma~\ref{lem:cyclicextension}
through successive cyclic quotients. If the original word
has length at most \(\ell\), the total loss in the proportion
of vertices covered is at most a factor of \(2^\ell\),
giving Lemma~\ref{lem:shortcentral}.

With this estimate, it remains to choose generators for the central subgroups
with small total word length. Lemma~\ref{lem:shortgenerators}
uses the Burnside basis theorem and the standard fact that
iterated commutators in group generators generate the lower
central factors. Together, these facts give generators for
the last nontrivial lower central term using only boundedly
many commutators of bounded word length for each distinct
prime dividing the group order.

These bounds ensure that applying Lemma~\ref{lem:shortcentral}
successively along the sequence of cyclic quotients loses
at most a factor of \(m^{o(1)}\) in the proportion of vertices
covered. Lifting the Hamiltonian cycle from the final abelian
quotient therefore gives a cycle with at least \(m^{1-o(1)}\)
vertices in the original Cayley graph.

\section{Notation and preliminaries}\label{sec:prelim}
We collect notation shared by several parts of the argument and record the elementary facts needed throughout. Our graph and group conventions are standard; see, for example, Godsil and Royle~\cite{GR} and Robinson~\cite{Robinson}. We state standard facts in the forms used below and provide references.

\subsection{Graph notation}\label{sec:graphnotation}
All graphs are finite and simple unless explicitly stated otherwise. We write \(V(X)\) and \(E(X)\) for the vertex and edge sets of \(X\), and \(u\sim v\) when \(u\) and \(v\) are adjacent. For a walk \(W\), the sets \(V(W)\) and \(E(W)\) consist of its vertices and edges. We also regard paths and cycles as their corresponding subgraphs; intersections between paths refer to their vertex sets. The \emph{order} of a path or cycle is its number of vertices, and its \emph{length} is its number of edges. A singleton vertex is a path of length zero.

An \emph{automorphism} of \(X\) is a permutation of \(V(X)\) such that two vertices are adjacent if and only if their images are adjacent. The group of all automorphisms, under composition, is denoted by \(\Aut(X)\). The graph is \emph{vertex-transitive} if, for all \(u,v\in V(X)\), some automorphism sends \(u\) to \(v\).

For a nonempty graph \(X\), let \(p(X)\) be the maximum order of a path in \(X\), and let \(c(X)\) be the maximum length of a cycle in \(X\), with \(c(X)=0\) when \(X\) has no cycle.

For \(A\subseteq V(X)\), we write \(X-A\) for the subgraph induced by \(V(X)\setminus A\). A vertex set is \emph{connected} if it induces a connected subgraph.

For a connected graph \(X\), we write \(\dist_X(u,v)\) for the distance between \(u\) and \(v\), and \(\diam(X)=\max_{u,v\in V(X)}\dist_X(u,v)\) for its diameter. The \emph{diameter of \(A\) in \(X\)}, for a nonempty set \(A\subseteq V(X)\), is \(\max_{u,v\in A}\dist_X(u,v)\). Paths used to measure this distance may leave \(A\).

\subsection{Groups and quotients}\label{sec:groupnotation}
A \emph{group} is a set with an associative operation, an identity and inverses. All groups used below are finite. We write the operation multiplicatively, denote the identity by \(1\), and call \(|G|\) the \emph{order} of \(G\). A group is \emph{trivial} if it consists only of \(1\), and \emph{nontrivial} otherwise. A \emph{subgroup} \(H\le G\) is a subset of \(G\) containing \(1\) and closed under products and inverses. The \emph{subgroup generated} by \(S\subseteq G\), written \(\langle S\rangle\), consists of all finite products of elements of \(S\) and their inverses. Throughout this paper, the \emph{rank} \(\rank(G)\) is the minimum size of a generating set of \(G\). A group generated by one element is \emph{cyclic}, and a group whose elements all commute is \emph{abelian}. The \emph{order} of an element \(g\) is the least positive integer \(m\) with \(g^m=1\).

For \(H\le G\) and \(g\in G\), the \emph{left coset} \(gH\) is \(\{gh:h\in H\}\). The \emph{index} \([G:H]\) is the number of distinct left cosets of \(H\) in \(G\). These cosets partition \(G\), so \(|G|=|H|[G:H]\). A \emph{representative} of a coset is any of its elements. A subgroup \(H\le G\) is \emph{normal in \(G\)}, written \(H\normal G\), if \(gHg^{-1}=H\) for all \(g\in G\). In this case the \emph{quotient group} \(G/H\) consists of the cosets with multiplication \((gH)(hH)=ghH\) for \(g,h\in G\). The map \(g\mapsto gH\) is the \emph{quotient homomorphism}; a homomorphism is a map preserving products. More generally, the \emph{fibre} of a map \(f:A\to B\) over \(b\in B\) is \(f^{-1}(b)=\{a\in A:f(a)=b\}\). A \emph{lift} of an element under a quotient homomorphism is an element in its fibre. A bijective homomorphism is an \emph{isomorphism}; groups related by one are \emph{isomorphic}. A quotient group is also called a \emph{factor}. For \(g\in G\), \emph{conjugation by \(g\)} is the map \(x\mapsto gxg^{-1}\) on \(G\). For a homomorphism \(\varphi:G\to K\), its \emph{kernel} is \(\ker\varphi=\{g\in G:\varphi(g)=1\}\), and its \emph{image} is \(\varphi(G)=\{\varphi(g):g\in G\}\). The \emph{centre} is \(Z(G)=\{z\in G:zg=gz\text{ for all }g\in G\}\); its elements are called \emph{central}, and a subgroup contained in \(Z(G)\) is a \emph{central subgroup}.

\subsection{Group actions and graph quotients}\label{sec:actionnotation}
An \emph{action} of a group \(\Gamma\) on a set \(V\) assigns to each \(g\in\Gamma\) a permutation \(v\mapsto gv\) of \(V\), such that \((gh)v=g(hv)\) and \(1v=v\) for all \(g,h\in\Gamma\) and \(v\in V\). For \(g\in\Gamma\) and \(A\subseteq V\), the \emph{translate} of \(A\) by \(g\) is \(gA=\{ga:a\in A\}\). An action of \(\Gamma\) on a graph \(X\) is an action on \(V(X)\) for which the permutation \(v\mapsto gv\) is an automorphism of \(X\) for every \(g\in\Gamma\). For \(v\in V\), the \emph{orbit} of \(v\) under \(H\le\Gamma\) is \(Hv=\{hv:h\in H\}\). The \emph{stabilizer} of \(v\) in \(\Gamma\) is \(\Gamma_v=\{g\in\Gamma:gv=v\}\); for an action on a graph, it is called a \emph{vertex stabilizer}. The orbit--stabilizer formula gives \(|\Gamma v|=[\Gamma:\Gamma_v]\); see~\cite[1.6.1]{Robinson}. The action is \emph{transitive} if \(V\ne\varnothing\) and, for every \(u,v\in V\), there exists \(g\in\Gamma\) with \(gu=v\). It is \emph{free} if no nonidentity element fixes any point. A free transitive action is \emph{regular}: for each ordered pair of points there is exactly one group element sending the first to the second.

If \(H\le\Aut(X)\), the \emph{orbit quotient} \(X/H\) has the \(H\)-orbits as vertices; two distinct orbits are adjacent if an edge of \(X\) joins them. The vertices mapped to a given quotient vertex form its \emph{fibre}, which here is an \(H\)-orbit. A \emph{lift} of a quotient path \(B_1,\ldots,B_k\) is a path \(v_1,\ldots,v_k\) with \(v_i\in B_i\) for each \(i\). If \(H\normal\Gamma\le\Aut(X)\), each \(g\in\Gamma\) sends the orbit \(Hv\) to \(H(gv)\). These permutations form the \emph{induced group}, a subgroup of \(\Aut(X/H)\).

We record the standard orbit-quotient facts used below; see Godsil and Royle~\cite[Section~9.3, p.~196]{GR} for the adjacency property underlying path lifting.

\begin{lemma}[Orbit quotients and path lifting]\label{lem:orbits}
Let \(X\) be a connected graph and \(H\le\Aut(X)\). The quotient \(X/H\) is connected, and every path in \(X/H\) lifts to a path of the same order in \(X\), starting at any prescribed vertex of its first orbit. If, in addition, \(H\normal\Gamma\le\Aut(X)\) and \(\Gamma\) is transitive on \(V(X)\), then \(X/H\) is vertex-transitive and the \(H\)-orbits have equal size.
\end{lemma}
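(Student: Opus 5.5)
We prove the three assertions of Lemma~\ref{lem:orbits} in order, each directly from the definitions. The only real input is that \(H\) acts transitively on each of its own orbits. This makes the adjacency relation between two orbits ``uniform'': if some vertex of \(Hu\) has a neighbour in \(Hv\), then every vertex of \(Hu\) does.

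\emph{Connectivity of \(X/H\).} Let \(Hu\) and \(Hv\) be two orbits. Connectivity of \(X\) gives a walk \(u=w_0,w_1,\ldots,w_r=v\) in \(X\). Consecutive vertices \(w_{i-1},w_i\) either lie in the same orbit or are adjacent in \(X\), so their orbits are equal or adjacent in \(X/H\). Deleting repeated consecutive orbits from \(Hw_0,\ldots,Hw_r\) therefore gives a walk from \(Hu\) to \(Hv\) in \(X/H\).

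\emph{Path lifting.} This is the key step. We first prove the uniformity claim. Suppose the orbits \(B\) and \(B'\) are adjacent, witnessed by an edge \(xy\) with \(x\in B\) and \(y\in B'\). Let \(x'\in B\) be arbitrary, and choose \(h\in H\) with \(hx=x'\). Since \(h\) is an automorphism, \(x'=hx\sim hy\), and \(hy\in Hy=B'\).

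Now let \(B_1,\ldots,B_k\) be a path in \(X/H\) and let \(v_1\in B_1\) be the prescribed start. Given \(v_i\in B_i\), the uniformity claim provides a neighbour \(v_{i+1}\in B_{i+1}\). Induction on \(i\) builds the whole sequence \(v_1,\ldots,v_k\). The orbits \(B_i\) are pairwise distinct and partition \(V(X)\), so the \(v_i\) are distinct. Hence \(v_1,\ldots,v_k\) is a path in \(X\) of order \(k\).

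\emph{Vertex-transitivity and equal orbit sizes.} Assume \(H\normal\Gamma\) with \(\Gamma\) transitive on \(V(X)\). For \(g\in\Gamma\), normality gives
\[
g(Hv)=(gHg^{-1})gv=H(gv),
\]
so \(g\) maps orbits to orbits. This induced map on orbits is a bijection, with inverse induced by \(g^{-1}\). Since \(g\) is injective, \(|H(gv)|=|Hv|\).

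The induced map also preserves adjacency in \(X/H\). If \(xy\) is an edge of \(X\) joining \(Hu\) and \(Hv\), then \(gx\,gy\) is an edge joining \(H(gu)\) and \(H(gv)\). Applying the same argument to \(g^{-1}\) gives the converse, and distinctness of orbits is preserved by bijectivity. So each \(g\in\Gamma\) induces an automorphism of \(X/H\).

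Finally, given orbits \(Hu\) and \(Hv\), choose \(g\in\Gamma\) with \(gu=v\) by transitivity. Then \(g\) maps \(Hu\) to \(Hv\), which gives vertex-transitivity of \(X/H\). The equality \(|H(gv)|=|Hv|\) then shows that all orbits have the same size.

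The main obstacle is the uniformity step in path lifting. Everything else is bookkeeping, but that step is exactly where the group action on each orbit is used, and it is what allows the lift to start at an arbitrary prescribed vertex.
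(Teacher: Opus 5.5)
Your proposal is correct and follows essentially the same route as the paper's proof. Like the paper, you prove connectivity by projecting a walk and deleting consecutive repetitions, and you lift paths by using an element of \(H\) to move a witnessing edge to any prescribed vertex of the orbit. You then use the identity \(g(Hv)=H(gv)\) from normality to obtain vertex-transitivity of \(X/H\) and the equal orbit sizes, checking adjacency preservation somewhat more explicitly than the paper does.
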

\begin{proof}
Suppose two \(H\)-orbits \(B,C\) are adjacent, and choose an edge \(bc\) with \(b\in B\) and \(c\in C\). For any \(v\in B\), some \(h\in H\) satisfies \(hb=v\). Thus \(hc\in C\) is a neighbour of \(v\). We can therefore lift a quotient path successively from any prescribed starting vertex. The lifted vertices are distinct because the quotient path visits distinct orbits. Projecting paths in \(X\) and deleting consecutive repetitions gives walks between quotient vertices, proving that \(X/H\) is connected.

If \(H\normal\Gamma\), then \(g(Hv)=H(gv)\) for every \(g\in\Gamma\). Hence \(\Gamma\) permutes the orbits by graph automorphisms. Transitivity on \(V(X)\) makes this action transitive on the orbits and gives bijections between them, proving both remaining assertions.
\end{proof}

\subsection{Cayley graphs and labelled walks}\label{sec:cayleynotation}
A set \(S\subseteq G\) is \emph{symmetric} if \(s^{-1}\in S\) whenever \(s\in S\). For a symmetric generating set \(S\), the \emph{Cayley graph} \(\Cay(G,S)\) has vertex set \(G\), with \(g\) adjacent to \(gs\) for \(s\in S\) whenever \(gs\ne g\). A word in \(S\), or \(S\)-\emph{word}, is a sequence \(s_1,\ldots,s_k\) of members of \(S\); its length is \(k\), and its product is \(s_1\cdots s_k\). For \(H\normal G\), write \(S_H=\{sH:s\in S\}\) for the image of \(S\) in \(G/H\). The subscript specifies the subgroup by which we quotient. These Cayley graphs are simple: identity generators produce no edges, and repeated descriptions of an edge describe the same unordered pair.

The graph \(\Cay(G,S)\) is connected because every group element is represented by an \(S\)-word. Left multiplication by \(a\in G\) preserves edges, since it sends \(g,gs\) to \(ag,ags\). The action of \(G\) on itself by left multiplication is regular: for any \(g,h\in G\), there is exactly one \(a\in G\) with \(ag=h\), namely \(a=hg^{-1}\). If \(H\normal G\), then \(Hg=gH\), and an edge \(g,gs\) projects to \(gH,gsH\). Conversely every edge of \(\Cay(G/H,S_H)\) arises in this way. The orbit quotient by \(H\) is therefore \(\Cay(G/H,S_H)\).

Let \(S\) be a symmetric generating set of \(G\), and let \(H\normal G\). A \emph{labelled walk} in \(G/H\) consists of a starting coset \(gH\) and a word \(s_1,\ldots,s_k\) in the original set \(S\). Its successive vertices are \(gH,gs_1H,\ldots,gs_1\cdots s_kH\). Recording the original labels matters because different generators can have the same image in \(G/H\). Steps that repeat a quotient vertex are allowed in this labelled object. The walk is \emph{closed} if its last vertex equals its first. For \(k\ge1\), it is a \emph{simple labelled cycle of length \(k\)} if its last vertex equals its first and its first \(k\) vertices are distinct. We allow \(k=1\) and \(k=2\) for labelled cycles, although ordinary cycles in our simple graphs have length at least three. Reversal replaces the labels by \(s_k^{-1},\ldots,s_1^{-1}\), which still lie in \(S\) because it is symmetric. A labelled cycle of length two may use two different labels over the same underlying edge.

The \emph{lift} of a labelled walk starting at \(g\in G\) is obtained by multiplying successively by the same labels. Following the terminology of Ghaderpour and Witte Morris~\cite{GM}, the \emph{voltage} of a closed labelled walk in \(G/H\) is the ordered product \(h\in H\) of its labels in \(G\). Thus a lift starting at \(g\) ends at \(gh\). When \(H\le Z(G)\), we also have \(gh=hg\). Left multiplication by \(H\) preserves the labels and acts regularly on each fibre. A cyclic change of starting point conjugates the label product; when \(H\le Z(G)\), centrality therefore makes the voltage unchanged. Changing the representative of the starting coset also leaves the label product unchanged. We use voltage only with the labels and their order specified.

\subsection{Nilpotent groups}\label{sec:nilpotentnotation}
For \(u,v\in G\), the \emph{commutator} is \([u,v]=u^{-1}v^{-1}uv\). For an integer \(j\ge1\) and \(u_1,\ldots,u_j\in G\), define the \emph{simple commutator of weight \(j\)} recursively by \([u_1]=u_1\) and, for \(j\ge2\), \([u_1,\ldots,u_j]=[[u_1,\ldots,u_{j-1}],u_j]\). Thus the weight counts the entries, with brackets taken from the left.

For subgroups \(A,B\le G\), let \([A,B]\) be generated by the elements \([a,b]\) with \(a\in A\) and \(b\in B\). The \emph{lower central series} is \(\gamma_1(G)=G\) and \(\gamma_{j+1}(G)=[\gamma_j(G),G]\). Its subgroups are its \emph{terms}, and the quotients \(\gamma_j(G)/\gamma_{j+1}(G)\) are its \emph{lower central factors}. The series is descending, each term is normal in \(G\), and
\[
 \gamma_j(G)/\gamma_{j+1}(G)\le Z(G/\gamma_{j+1}(G))\qquad(j\ge1);
\]
see~\cite[Section~5.1, p.~125]{Robinson}. For a nonnegative integer \(c\), the group \(G\) is \emph{nilpotent of class at most \(c\)} if \(\gamma_{c+1}(G)=\{1\}\); its class is the least such integer. The trivial group has class zero.

A quotient of \(G\) has rank at most \(\rank(G)\), since the images of any generating set generate the quotient. For every normal subgroup \(H\normal G\),
\[
 \gamma_j(G/H)=\{gH:g\in\gamma_j(G)\}\qquad(j\ge1).
\]
Thus taking a quotient does not increase nilpotency class; see~\cite[Theorem~5.1.4]{Robinson}. In particular, if \(G\) has class at most \(c\ge1\), then \(G/\gamma_c(G)\) has class at most \(c-1\).

\subsection{Numerical and asymptotic notation}\label{sec:asymptoticnotation}
We write \(\Z\) for the integers, \([k]=\{1,\ldots,k\}\), and \(|A|\) for the cardinality of a finite set \(A\). We use \(\log\) for the natural logarithm. For a real number \(x\), \(\lfloor x\rfloor\) and \(\lceil x\rceil\) denote its floor and ceiling. For integers \(m\ge1\) and \(h\ge0\), \(\gcd(m,h)\) denotes their greatest common divisor, with \(\gcd(m,0)=m\). The group of integers modulo \(m\), under addition, is denoted by \(\Z/m\Z\). Empty sums and products have values zero and one, respectively; an empty word in a group has product \(1\).

For functions \(G(n)>0\), the notation \(F(n)=O(G(n))\) means that \(|F(n)|\le C G(n)\) for a constant \(C>0\) and all sufficiently large \(n\); \(F(n)=\Omega(G(n))\) means \(F(n)\ge cG(n)\) for a constant \(c>0\) and all sufficiently large \(n\). The notation \(h(n)=o(G(n))\), for \(G(n)>0\), means \(h(n)/G(n)\to0\) as \(n\to\infty\); in particular, \(h(n)=o(1)\) means \(h(n)\to0\). A lower bound \(F(n)\ge n^{\alpha-o(1)}\) means that for every \(\eps>0\) there is \(n_0(\eps)\) such that \(F(n)\ge n^{\alpha-\eps}\) whenever \(n\ge n_0(\eps)\). An upper bound \(F(n)\le n^{\alpha+o(1)}\) means that for every \(\eps>0\), one has \(F(n)\le n^{\alpha+\eps}\) for all sufficiently large \(n\). A bound is \emph{uniform} over a family if its constants and thresholds depend only on the parameters explicitly fixed for that family. We call a lower bound of the form \(n^{1-o(1)}\) \emph{near-linear}.

\section{Paths through large orbits}\label{sec:routes}
We construct a long path when the orbits of a normal subgroup are large relative to their diameters measured in the original graph. We repeatedly traverse a spanning tree of maximum degree at most three in the quotient and join the incoming and outgoing edges at each visit by a random short path in the original graph. The degree bound limits the number of visits to each orbit in one traversal to at most four, independently of its degree in the quotient. Vertex-transitivity and the Lov\'asz local lemma allow us to choose these paths so that those from nonconsecutive visits are disjoint. We then extract a long simple path from their union together with the edges between successive visits. Lemma~\ref{lem:largefibres} gives the resulting bound.

\begin{definition}[Random paths between vertices in an orbit]\label{def:routes}
Let \(X\) be a connected graph with a transitive group \(\Gamma\le\Aut(X)\), and let \(H\normal\Gamma\). Denote the family of \(H\)-orbits by \(\mathcal B\), their number by \(q\), and their common size by \(s\), so \(n:=|V(X)|=qs\). Suppose each orbit has diameter at most a real number \(r\ge0\), measured in \(X\), and set \(R=r+1\). Choose two vertices independently and uniformly at random from an orbit \(B\), then choose a path uniformly at random from all shortest paths in \(X\) between them. If the endpoints coincide, use the singleton path. For brevity, we call this random path a \emph{route}. For \(v\in V(X)\), write \(\mu_B(v)\) for the probability that it contains \(v\). Its endpoints lie in \(B\), but the path may leave \(B\).
\end{definition}

\begin{lemma}[Lifting through large orbits]\label{lem:largefibres}
In the setting of Definition~\ref{def:routes}, if \(q\ge2\) and \(s\ge192R^2\), then
\begin{equation}\label{eq:largefibres}
 p(X)\ge\frac{n}{1000R^2}.
\end{equation}
\end{lemma}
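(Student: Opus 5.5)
The plan follows Section~\ref{sec:overviewroutes}. First, by Lemma~\ref{lem:orbits} the quotient \(Q=X/H\) is connected and vertex-transitive with \(q\ge2\) vertices. Lemma~\ref{lem:tree} (Mader--Win) gives a spanning tree \(T\) of \(Q\) of maximum degree at most three. Let \(W_0\) be the closed walk in \(T\) that traverses each edge once in each direction (a depth-first traversal), with \(2(q-1)\) steps. Every orbit appears on \(W_0\) at most \(\deg_T+1\le4\) times, counting the closing return once. Concatenate \(t\) copies of \(W_0\) to obtain a walk \(B_0,B_1,\dots,B_L\) with \(L=2t(q-1)\), in which each orbit occurs at most \(3t+1\) times. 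We take
\[
t=\Bigl\lfloor \frac{s}{c R^2}\Bigr\rfloor
\]
for a suitable absolute constant \(c\); the hypothesis \(s\ge192R^2\) is what makes \(t\ge1\) with the constant one gets.

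Next comes the random construction. For each step \(i\), independently choose a uniform edge \(e_i\) of \(X\) between \(B_{i-1}\) and \(B_i\). Independently choose uniform vertices \(x_0\in B_0\) and \(y_L\in B_L\). At visit \(j\), the two endpoints in \(B_j\) are independent and uniform, because \(H\) acts transitively on both orbits and preserves the edges between them. We then join them by a uniform shortest path \(P_j\), which has at most \(R\) vertices, so each \(P_j\) is distributed exactly as a route in \(B_j\). The key estimate is
\[
\sum_{B\in\mathcal B}\mu_B(v)\le \frac{qR}{n}=\frac{R}{s}\qquad\text{for every } v.
\]
To prove it, note that the sum is the expected number of routes through \(v\) when one route is taken per orbit. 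The total expected count is at most \(qR\), and \(\Gamma\) permutes the orbits and preserves the route distribution, so the sum is independent of \(v\). It follows that for a fixed visit \(j\) and a fixed realisation of \(P_j\),
\[
\sum_{|k-j|\ge2}\Pr\bigl(P_k\cap P_j\ne\varnothing\bigr)\le R(3t+1)\frac{R}{s}.
\]
For this we use that \(P_k\) is independent of \(P_j\) when \(|k-j|\ge2\), and we condition on \(P_j\).

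We then apply the Lov\'asz local lemma, and this is the main obstacle. The bad events are \(A_{jk}=\{P_j\cap P_k\ne\varnothing\}\) for \(|k-j|\ge2\). Each \(P_j\) is determined by the variables \(e_j,e_{j+1}\) (or \(x_0\), \(y_L\) at the ends) and its own path choice. Hence \(A_{jk}\) is mutually independent of all events that share no variable with \(P_j\) or \(P_k\), that is, of all \(A_{j'k'}\) with \(\{j',k'\}\) at distance at least \(2\) from \(\{j,k\}\). The number of dependent events is unbounded, so the symmetric local lemma is unsuitable. We would instead use the asymmetric version with weights \(x_{jk}=2\Pr(A_{jk})\). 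Each visit index touches neighbouring events with total probability at most \(3\cdot2(3t+1)R^2/s\), which is at most \(1/4\) for our choice of \(t\). This gives \(\prod(1-x)\ge e^{-O(1)\cdot(\ldots)}\ge1/2\), which verifies the condition. Care is needed because \(\Pr(A_{jk})\) is bounded only in sum, not individually; the asymmetric form tolerates this. With positive probability no two nonconsecutive \(P_j\) meet.

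The final step extracts a long path. Fix such an outcome and let \(F\) be the union of all \(P_j\) and all \(e_i\). Then \(F\) is a connected graph containing \(x_0\) and \(y_L\). Label each vertex by the least \(j\) with \(v\in P_j\), and label an endpoint of \(e_i\) consistently with the visit containing it. An edge of \(F\) lies in some \(P_j\cup P_{j+1}\cup\{e_{j+1}\}\), and a vertex lies only in paths \(P_k\) with pairwise consecutive indices. Hence the labels of adjacent vertices differ by at most \(2\). A shortest \(x_0\)--\(y_L\) path in \(F\) is a simple path in \(X\) whose labels go from \(0\) to \(L\) in increments of at most \(2\). It therefore has at least \(L/2+1\ge t(q-1)\) vertices. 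Finally,
\[
t(q-1)\ge\frac{tq}{2}\ge\frac{n}{4cR^2}\cdot\frac12,
\]
which is at least \(n/(1000R^2)\) once \(c\) is fixed; here we use \(t\ge s/(2cR^2)\) since \(s/(cR^2)\ge1\).
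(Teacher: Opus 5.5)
Your proposal is correct and is essentially the paper's proof: the same degree-three spanning tree traversed $t$ times, uniform connecting edges and shortest-path routes, the vertex-transitive traffic bound, the local lemma (your asymmetric form with $x_A=2\Pr(A)$ is how the probability-sum version cited in the paper is derived), and the same labelling argument for extracting the path. Two small fixes are needed: the terminal vertex has label at least $L-1$ rather than $L$, which still gives at least $t(q-1)$ vertices; and you should fix $c$ explicitly, for example $c=192$ as in the paper, which makes the neighbour sum at most $1/8$ and, once you drop the spurious extra factor $\tfrac12$ in your final display, gives $t(q-1)\ge n/(768R^2)\ge n/(1000R^2)$.
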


\subsection{A spanning tree of bounded degree}
We first show that the orbit quotient \(X/H\) from Definition~\ref{def:routes} has a spanning tree of maximum degree at most three. A graph \(X\) is \emph{1-tough} if, for every \(A\subseteq V(X)\) such that \(X-A\) has at least two components, their number is at most \(|A|\). We use the following consequence of Win's theorem.

\begin{theorem}[Win~\cite{Win}]\label{thm:Win}
Every finite connected 1-tough graph has a spanning tree of maximum degree at most three.
\end{theorem}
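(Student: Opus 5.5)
Since Theorem~\ref{thm:Win} is quoted from~\cite{Win}, the plan below reconstructs the standard argument. We would derive it from Win's sufficient condition for spanning trees of maximum degree at most \(k\), specialized to \(k=3\): if a connected graph \(X\) satisfies
\[
 c(X-A)\le |A|+2 \qquad\text{for every } A\subseteq V(X),
\]
where \(c(\cdot)\) counts components, then \(X\) has a spanning tree of maximum degree at most three. The 1-tough hypothesis gives this condition at once. If \(X-A\) has at least two components, their number is at most \(|A|\). Otherwise it is at most \(1\le |A|+2\). So the whole task is the degree-bounded spanning tree criterion, which we would prove by an extremal choice of tree together with an exchange argument.

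\textbf{Extremal choice and exchanges.} Suppose no spanning tree has maximum degree at most three. Among all spanning trees \(T\), choose one minimizing, lexicographically, the number of vertices of degree at least \(\Delta(T)\) after first minimizing \(\Delta(T)\). Call a vertex \emph{saturated} if \(d_T(v)\ge 3\). We then grow a set \(A\) of saturated vertices by the usual Fürer--Raghavachari style closure. Start with the vertices of maximum degree. Repeatedly add saturated vertices \(v\) for which some edge \(xy\in E(X)\setminus E(T)\) with \(x,y\notin A\) closes a cycle of \(T+xy\) through \(v\). The key exchange claim is that if such an edge has both ends non-saturated, then swapping \(xy\) for a \(T\)-edge at \(v\) lowers the degree of \(v\). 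Propagating this down the closure eventually reduces some maximum-degree vertex without creating a new one, contradicting minimality. Hence, at the end of the closure, every edge of \(X\) joining two distinct components of \(T-A\) has an end in \(A\). Equivalently, the components of \(T-A\) are unions of components of \(X-A\), so \(c(X-A)\ge c(T-A)\).

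\textbf{Counting.} For a forest, deleting \(A\) gives
\[
 c(T-A)=\sum_{a\in A}d_T(a)-e(T[A])-|A|+1 .
\]
Since \(e(T[A])\le |A|-1\), and every vertex of \(A\) has degree at least three with at least one of degree at least four, this yields \(c(T-A)\ge |A|+3\). That exceeds \(|A|+2\), a contradiction.

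\textbf{Main obstacle.} The delicate point is the exchange step: showing that, in the final closure, a non-tree edge between different components of \(T-A\) always lets us reduce a maximum-degree vertex without creating a new one. The cycle it closes may pass through several vertices of \(A\), so the improvement must be pushed backwards along the order in which vertices entered \(A\). We would handle this exactly as in Win's proof: induct on the stage at which vertices were added, and use that each added vertex was non-maximal and so has slack.
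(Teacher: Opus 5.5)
Your reduction matches the paper. The paper gives no proof here: it notes only that the statement is the case $k=3$ of Win's criterion (a connected graph with $\omega(X-A)\le(k-2)|A|+2$ for every $A$, where $\omega$ counts components, has a spanning tree of maximum degree at most $k$), and that orders one and two are trivial. Your derivation of $\omega(X-A)\le|A|+2$ from 1-toughness is correct, and citing Win at that point would finish the proof. The genuine gap is in your self-contained sketch of Win's criterion. Your closure runs in the wrong direction, and the property you claim at its end is false.

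\textbf{The gap.} Your rule puts a saturated $v$ into $A$ whenever $v$ lies on the fundamental cycle of a non-tree edge $xy$ with $x,y\notin A$. Once $v$ is added, $xy$ itself joins two distinct components of $T-A$ with neither end in $A$. Nothing in the rule links $v$ to a maximum-degree vertex, so the exchange at $v$ yields no contradiction.

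\textbf{A counterexample.} Let $X$ have edges $ua,ub,uc,uw,wv,vx,vy,xy$, and let $T=X-xy$.
\begin{itemize}
\item Since $X-u$ has four components, every spanning tree has maximum degree $4$, attained only at $u$. So $T$ is extremal for your lexicographic order.
\item Your closure gives $A=\{u,v\}$, because $d_T(v)=3$ and $v$ lies on the cycle of $xy$.
\item Yet $xy$ joins the components $\{x\}$ and $\{y\}$ of $T-A$. Swapping $xy$ for $vx$ only lowers $d_T(v)$ from $3$ to $2$, and leaves $\Delta(T)$ and the number of maximum-degree vertices unchanged.
\item So $\omega(X-A)=5<6=\omega(T-A)$, and your counting cannot be applied to this $A$.
\end{itemize}

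\textbf{How to repair it.} The F\"urer--Raghavachari closure goes the other way.
\begin{itemize}
\item Start with $W=\{v:d_T(v)\ge\Delta(T)-1\}$. All of these vertices are saturated, since $\Delta(T)\ge4$.
\item Repeatedly \emph{remove} from $W$ the vertices of degree $\Delta(T)-1$ lying on the $T$-path between the ends of an $X$-edge that joins distinct components of $T-W$.
\item The real content is an exchange lemma. If such a path ever meets a vertex of degree $\Delta(T)$, that vertex can be lowered by exchanges performed in reverse order of removal. The removed vertices in the chain return to their old degree, and the ultimate endpoints rise to at most $\Delta(T)-1$. This uses that the exchanges for each endpoint stay inside disjoint merged components. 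It contradicts your lexicographic minimality.
\item At termination, $W$ still contains every maximum-degree vertex and no $X$-edge joins distinct components of $T-W$. Your count then gives $\omega(X-W)=\omega(T-W)\ge|W|+3$.
\end{itemize}
That exchange lemma is what you deferred with ``as in Win's proof''. Deferring it does not rescue the closure you actually defined, so you must either prove it for the corrected closure or simply cite Win, as the paper does.
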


This is the maximum-degree-three case of Win's toughness criterion. For a connected graph of order one or two, the graph itself is the required spanning tree.

The \emph{edge-connectivity} of a connected graph of order at least two is the minimum number of edges whose deletion disconnects it. Mader's edge-connectivity theorem~\cite{Mader} states that this number equals the degree for a connected vertex-transitive graph. The next lemma follows from this theorem and Win's theorem; we give the short deduction of 1-toughness.

\begin{lemma}\label{lem:tree}
Every connected vertex-transitive graph on at least two vertices is 1-tough and has a spanning tree of maximum degree at most three.
\end{lemma}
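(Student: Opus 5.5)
The plan is to prove 1-toughness from Mader's theorem by counting edges, and then to apply Theorem~\ref{thm:Win}. Let \(X\) be connected and vertex-transitive on at least two vertices, of common degree \(d\ge1\). If \(X\) has order two, it is \(K_2\), which is its own spanning tree. It is also vacuously 1-tough, since deleting at most one vertex never leaves two components. So assume \(|V(X)|\ge3\).

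Take \(A\subseteq V(X)\) such that \(X-A\) has components \(C_1,\ldots,C_k\) with \(k\ge2\). We need \(k\le|A|\).

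For each \(i\), the edges leaving \(V(C_i)\) form an edge cut separating \(C_i\) from the rest of \(X\). The rest is nonempty because \(k\ge2\). By Mader's theorem, this cut has at least \(d\) edges. Every such edge has its other endpoint in \(A\), because distinct components of \(X-A\) are not adjacent.

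These cuts are pairwise disjoint as edge sets. Each edge counted joins some \(C_i\) to \(A\), and it is attributed to the unique component containing its non-\(A\) endpoint. Therefore the number of edges between \(A\) and \(V(X)\setminus A\) is at least \(kd\).

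On the other hand, each vertex of \(A\) has degree \(d\), so this number is at most \(|A|d\). Since \(d\ge1\), we get \(k\le|A|\). Hence \(X\) is 1-tough, and Theorem~\ref{thm:Win} gives a spanning tree of maximum degree at most three.

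The only point needing care is the disjointness and attribution of the cut edges, so that the lower bound \(kd\) is not double counted. This holds because every edge leaving a component goes to \(A\). No real obstacle is expected. The only subtlety is that Mader's theorem is applied to cuts of \(X\) itself, which is legitimate since \(X\) is connected.
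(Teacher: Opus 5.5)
Your proposal is correct and follows essentially the same route as the paper: apply Mader's theorem to each component of \(X-A\) to get at least \(kd\) edges into \(A\), compare with the at most \(d|A|\) edges leaving \(A\), and then invoke Win's theorem. Your separate treatment of order two is harmless but unnecessary, since the same counting argument already covers \(K_2\).
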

\begin{proof}
Let \(X\) be such a graph, and let \(d\ge1\) be its common degree. By Mader's edge-connectivity theorem~\cite{Mader}, every nonempty proper vertex set has at least \(d\) edges to its complement. Suppose that \(X-U\) has \(h\ge2\) components. Each component has at least \(d\) edges to \(U\), whereas at most \(d|U|\) edges join \(U\) to its complement. Thus \(hd\le d|U|\), and hence \(h\le|U|\). This proves 1-toughness. The spanning tree follows from Theorem~\ref{thm:Win}.
\end{proof}

\subsection{Controlling intersections of the random paths}\label{sec:probabilitynotation}
We write \(\Pr(A)\) for the probability of an event \(A\), and \(\E Z\) for the expectation of a random variable \(Z\). Events \(A,B\) are \emph{independent} if \(\Pr(A\cap B)=\Pr(A)\Pr(B)\). A \emph{dependency graph} for a family of events \((A_i)_{i\in I}\) is a graph on \(I\) such that, for each \(i\in I\), the event \(A_i\) is independent of every event formed by unions, intersections and complements of events indexed outside \(\{i\}\cup N(i)\), where \(N(i)\) denotes the set of neighbours of \(i\). We use the following form of the Lov\'asz local lemma.

\begin{lemma}[Lov\'asz local lemma, probability-sum form~{\cite[Corollary~6.1.10]{ZhaoPM}}]\label{cor:LLLsum}
Let \((A_i)_{i\in I}\) be a finite family of events with a dependency graph. If \(\Pr(A_i)<1/2\) and
\[
 \sum_{j\in N(i)}\Pr(A_j)\le\frac14
 \qquad(i\in I),
\]
then with positive probability none of the events occurs.
\end{lemma}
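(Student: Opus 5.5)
The plan is the standard inductive proof of the symmetric local lemma, specialised to the weights \(x_i=2\Pr(A_i)\). For \(S\subseteq I\) write \(B_S=\bigcap_{j\in S}\overline{A_j}\), with \(B_\varnothing\) the whole space. I will prove the following two claims together by induction on \(|S|\).

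\begin{enumerate}
\item[(a)] \(\Pr(B_S)>0\).
\item[(b)] For every \(i\in I\setminus S\), \(\Pr(A_i\mid B_S)\le 2\Pr(A_i)\).
\end{enumerate}

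Given these claims, order \(I\) as \(i_1,\dots,i_N\). The chain rule together with (b) gives
\[
 \Pr(B_I)=\prod_{k=1}^N\bigl(1-\Pr(A_{i_k}\mid B_{\{i_1,\dots,i_{k-1}\}})\bigr)\ge\prod_{k=1}^N\bigl(1-2\Pr(A_{i_k})\bigr).
\]
This product is positive because each \(\Pr(A_i)<1/2\).

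For the induction, the case \(S=\varnothing\) is trivial. Claim (a) for \(S\) follows from the claims for smaller sets. Pick \(i\in S\); then \(\Pr(B_S)=\Pr(B_{S\setminus\{i\}})\bigl(1-\Pr(A_i\mid B_{S\setminus\{i\}})\bigr)\). This is positive by (a) and (b) for \(S\setminus\{i\}\), since \(1-2\Pr(A_i)>0\).

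For (b), fix \(i\notin S\) and split \(S\) into \(S_1=S\cap N(i)\) and \(S_2=S\setminus S_1\). The event \(B_{S_2}\) is formed from events indexed outside \(\{i\}\cup N(i)\). By the definition of a dependency graph, \(A_i\) is therefore independent of \(B_{S_2}\). If \(S_1=\varnothing\), this independence gives \(\Pr(A_i\mid B_S)=\Pr(A_i)\) directly. Otherwise I write
\[
 \Pr(A_i\mid B_S)=\frac{\Pr(A_i\cap B_{S_1}\cap B_{S_2})}{\Pr(B_{S_1}\cap B_{S_2})}
 \le\frac{\Pr(A_i)\Pr(B_{S_2})}{\Pr(B_{S_1}\mid B_{S_2})\Pr(B_{S_2})}.
\]
Here the numerator was bounded by dropping \(B_{S_1}\) and then using independence. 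The conditioning on \(B_{S_2}\) is legitimate because \(\Pr(B_{S_2})\ge\Pr(B_S)>0\).

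It remains to show \(\Pr(B_{S_1}\mid B_{S_2})\ge 1/2\). A union bound gives \(\Pr(B_{S_1}\mid B_{S_2})\ge1-\sum_{j\in S_1}\Pr(A_j\mid B_{S_2})\). For each \(j\in S_1\) we have \(j\notin S_2\), and \(|S_2|<|S|\) because \(S_1\ne\varnothing\). So the induction hypothesis (b) applies and gives \(\Pr(A_j\mid B_{S_2})\le2\Pr(A_j)\). Hence
\[
 \Pr(B_{S_1}\mid B_{S_2})\ge1-2\sum_{j\in N(i)}\Pr(A_j)\ge1-2\cdot\tfrac14=\tfrac12,
\]
which yields \(\Pr(A_i\mid B_S)\le2\Pr(A_i)\).

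The only delicate point is the order of the induction. Claim (a) must be available before conditioning on \(B_{S_2}\). The induction step for (b) must only invoke (b) for the strictly smaller set \(S_2\), which is why \(S_1\ne\varnothing\) is separated out. Proving (a) and (b) jointly by induction on \(|S|\) handles both requirements, so I expect no further obstacle.
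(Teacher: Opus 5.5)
Your proof is correct: it is the standard inductive proof of the asymmetric local lemma specialised to the weights $x_i=2\Pr(A_i)$, which is the usual way this probability-sum form is derived. The paper itself gives no argument beyond citing Zhao's Corollary~6.1.10, and your differences from that derivation are cosmetic: you apply the union bound to $\Pr\bigl(\bigcup_{j\in S_1}A_j\mid B_{S_2}\bigr)$ directly instead of passing through $\prod_{j\in S_1}(1-x_j)\ge1-\sum_{j\in S_1}x_j$, and claim (a) at $S=I$ already yields the conclusion, so the closing chain-rule product is not needed.
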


The next estimate is where vertex-transitivity enters the probability calculation. If we take one random route with endpoints in each orbit, the expected number of these routes containing a given vertex is the same at every vertex, even though the routes may leave their endpoint orbits.

\begin{lemma}\label{lem:traffic}
In the setting of Definition~\ref{def:routes}, for every \(v\in V(X)\),
\begin{equation}\label{eq:traffic}
 \sum_{B\in\mathcal B}\mu_B(v)\le R/s.
\end{equation}
\end{lemma}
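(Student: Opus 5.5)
The plan is to show that \(F(v):=\sum_{B\in\mathcal B}\mu_B(v)\) is constant in \(v\), and then compute its average by double counting.

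\emph{Step 1: \(F\) is \(\Gamma\)-invariant.} Fix \(g\in\Gamma\) and \(B\in\mathcal B\). Because \(H\normal\Gamma\), the set \(gB\) is again an \(H\)-orbit. The automorphism \(g\) maps \(B\) bijectively onto \(gB\), so a uniform pair of independent vertices of \(B\) is carried to a uniform pair of independent vertices of \(gB\). Moreover, \(g\) preserves distances, so it maps the set of shortest paths between \(x\) and \(y\) bijectively onto the set of shortest paths between \(gx\) and \(gy\). It therefore carries the uniform choice among the former to the uniform choice among the latter. The singleton convention for coinciding endpoints is also preserved. Hence the image under \(g\) of a route for \(B\) has exactly the distribution of a route for \(gB\), which gives \(\mu_{gB}(gv)=\mu_B(v)\). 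Since \(B\mapsto gB\) permutes \(\mathcal B\), summing over \(B\) yields \(F(gv)=F(v)\). As \(\Gamma\) is transitive, \(F\) is constant on \(V(X)\).

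\emph{Step 2: average over vertices.} For each \(B\), we have \(\sum_{v}\mu_B(v)=\E|V(P_B)|\), where \(P_B\) is the route for \(B\). Both endpoints of \(P_B\) lie in \(B\), which has diameter at most \(r\) in \(X\), and \(P_B\) is a shortest path between them. So \(P_B\) has at most \(r\) edges and at most \(r+1=R\) vertices. Summing over the \(q\) orbits gives \(\sum_v F(v)\le qR\). Because \(F\) is constant, for every \(v\) we get \(F(v)\le qR/n=R/s\), using \(n=qs\).

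\emph{Main obstacle.} The only delicate point is Step 1. One must check that the route distribution is equivariant, in particular that the uniform choice among shortest paths and the independent uniform choice of endpoints both transport correctly under automorphisms, and that normality of \(H\) in \(\Gamma\) is what makes \(g\) permute the orbits. Once that is in place, the rest is a direct double count.
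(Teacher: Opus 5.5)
Your proposal is correct and follows the paper's proof: equivariance of the route distribution under \(\Gamma\), which permutes the \(H\)-orbits by normality, makes \(\sum_B\mu_B(v)\) constant in \(v\). Double counting then gives \(\sum_v\sum_B\mu_B(v)=\sum_B\E|V(P_B)|\le qR\), and dividing by \(n=qs\) yields the bound \(R/s\).
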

\begin{proof}
An automorphism \(g\in\Gamma\) maps the random route distribution for \(B\) to that for \(gB\): it bijects the possible endpoint pairs and the shortest paths for each pair. Since \(\Gamma\) permutes \(\mathcal B\), the left side of~\eqref{eq:traffic} has the same value at \(v\) and \(gv\). Transitivity makes this value constant over all \(n\) vertices. For each \(B\in\mathcal B\), let \(P_B\) denote a random route with endpoints in \(B\). Each route contains at most \(R\) vertices, so
\[
 \sum_{v\in V(X)}\sum_{B\in\mathcal B}\mu_B(v)
 =\sum_{B\in\mathcal B}\E|V(P_B)|\le qR.
\]
Division by \(n=qs\) proves the claim.
\end{proof}

\subsection{Proof of the large-orbit bound}
An \emph{oriented edge} is an ordered pair of adjacent vertices; its first vertex is its \emph{tail} and its second its \emph{head}.

\begin{proof}[Proof of Lemma~\ref{lem:largefibres}]
\emph{Choosing the visits.}
The quotient \(Q=X/H\) is connected and vertex-transitive by Lemma~\ref{lem:orbits}. By Lemma~\ref{lem:tree}, it has a spanning tree of maximum degree at most three. Set
\[
 t=\left\lfloor\frac{s}{192R^2}\right\rfloor.
\]
Then \(t\ge1\) and \(t\ge s/(384R^2)\).
Traverse each tree edge once in each direction, and repeat this closed traversal \(t\) times. The resulting walk is
\[
 B_1,\ldots,B_m,\qquad m=2t(q-1)+1.
\]
In one traversal, each orbit occurs as the initial vertex of an edge exactly as many times as its tree degree. Thus each orbit occurs at most \(3t+1\) times in the displayed walk.

\emph{Joining successive visits.}
For each \(i<m\), independently choose an edge \(E_i\) uniformly at random from the edges between \(B_i\) and \(B_{i+1}\), and orient it from \(B_i\) to \(B_{i+1}\). Its tail is uniformly distributed on \(B_i\), and its head is uniformly distributed on \(B_{i+1}\). Indeed, \(H\) preserves the two orbits and acts transitively on each, so every vertex on either side has the same number of neighbours on the other side. Also choose the initial and terminal vertices uniformly at random from \(B_1\) and \(B_m\), respectively, independently of one another and of the edges.

At visit \(i\), join the incoming and outgoing endpoints by a shortest path \(P_i\) chosen uniformly at random, using the initial or terminal vertex at the two ends of the walk. Conditional on the endpoints, make all these path choices independently. The two endpoints of \(P_i\) are independent and uniformly distributed in \(B_i\), so \(P_i\) has the distribution from Definition~\ref{def:routes}.

For the dependence calculation, independently of all edge and endpoint choices, preselect for every visit \(i\) and every possible ordered pair of endpoints a shortest path between these endpoints uniformly at random, making all these selections independently. Then \(P_i\) is determined by the paths preselected for visit \(i\) and the edges \(E_{i-1},E_i\), with the extra endpoint variable replacing the missing edge variable at either end. In particular, \(P_i\) and \(P_j\) are independent whenever \(|i-j|\ge2\).

\emph{Avoiding intersections.}
For each unordered pair with \(|i-j|\ge2\), let \(A_{ij}=A_{ji}\) be the event that \(P_i\) and \(P_j\) intersect, and put \(p_{ij}=\Pr(A_{ij})\). For each fixed visit, we bound the sum of its intersection probabilities with nonconsecutive visits. The union bound states that the probability of a union of events is at most the sum of their probabilities. Independence, this bound, and Lemma~\ref{lem:traffic} give
\begin{align}
 \sum_{j:\,|i-j|\ge2}p_{ij}
 &\le\sum_{v\in V(X)}\mu_{B_i}(v)
              \sum_{j:\,|i-j|\ge2}\mu_{B_j}(v)\notag\\
 &\le(3t+1)\frac Rs
              \sum_{v\in V(X)}\mu_{B_i}(v)
 \le\frac{(3t+1)R^2}{s}=:a.\label{eq:rowsum}
\end{align}

Here \(\mu_{B_j}(v)\) depends on the visit \(j\) only through its orbit \(B_j\): whenever \(B_j=B_k\), the routes \(P_j\) and \(P_k\) have the same distribution. Since each orbit occurs at most \(3t+1\) times, its term in the sum from Lemma~\ref{lem:traffic} is repeated at most \(3t+1\) times. Also, \(\sum_v\mu_{B_i}(v)=\E|V(P_i)|\le R\). Our choice of \(t\) ensures
\[
 a\le\frac3{192}+\frac1{192}=\frac1{48}.
\]

Join two events in a dependency graph whenever at least one of the independent random choices described above is used to determine both. A neighbour \(A_{k\ell}\) of \(A_{ij}\) must have at least one index in
\[
 \{i-1,i,i+1,j-1,j,j+1\}\cap[m].
\]
Events outside this neighbourhood collectively depend on variables disjoint from those determining \(A_{ij}\), as required for a dependency graph. Summing~\eqref{eq:rowsum} over these at most six indices gives
\[
 \sum_{A_{k\ell}\sim A_{ij}}\Pr(A_{k\ell})
 \le6a\le\frac18.
\]
Every bad event has probability at most \(a<1/2\), and the sum over its neighbours is less than \(1/4\). Lemma~\ref{cor:LLLsum} therefore gives an outcome with
\[
 P_i\cap P_j=\varnothing\qquad\text{whenever }|i-j|\ge2.
\]

\emph{Extracting a long path.}
Consecutive routes may still intersect, so we extract a simple path from their union. Let \(Y\) consist of the chosen paths and connecting edges \(E_i\), with no other edges of \(X\). This is a connected subgraph. Label each vertex of \(Y\) by the smallest index of a path containing it. A vertex of \(P_i\) has label \(i-1\) or \(i\). Thus an edge of a chosen path changes the label by at most one, and a connecting edge changes it by at most two.

The initial vertex has label \(1\), while the terminal vertex has label at least \(m-1\). They are distinct, since \(m\ge3\) and \(P_1\cap P_m=\varnothing\). Any simple path between them in \(Y\) has at least \((m-2)/2\) edges, and hence at least \(t(q-1)\) vertices. Consequently,
\[
 p(X)\ge t(q-1)
 \ge\frac{s}{384R^2}\cdot\frac q2
 =\frac{n}{768R^2}
 \ge\frac{n}{1000R^2},
\]
as required.
\end{proof}

\section{Lifting cycles from quotient Cayley graphs}\label{sec:cyclic}
We build long cycles in a Cayley graph by lifting from a quotient by a cyclic subgroup of the centre of the underlying group. If a generator of this subgroup can be written as a word of length \(\ell\) in the given generators, Lemma~\ref{lem:shortcentral} shows that lifting retains at least \(1/2^\ell\) of the proportion of vertices covered in the quotient, provided the quotient has at least three vertices. In Section~\ref{sec:nilpotent}, we apply Lemma~\ref{lem:shortcentral} along a sequence of quotients by cyclic central subgroups to obtain the near-linear bound for nilpotent Cayley graphs.

Recall that \(c(X)\) is the maximum length of an ordinary cycle in \(X\), and equals zero if \(X\) has no cycle. For a nonempty graph \(X\), write
\begin{equation}\label{eq:cycleproportion}
 \rho(X)=\frac{c(X)}{|V(X)|}.
\end{equation}
Recall that the \emph{voltage} of a closed labelled walk in \(G/H\) is the ordered product \(h\in H\) of its original labels in \(S\). A lift starting at \(g\in G\) ends at \(gh\); thus the voltage records its displacement within the starting fibre.

Our starting point is the classical Factor Group Lemma, recorded in Witte and Gallian~\cite[Section~2.2]{WitteGallian}; see also Ghaderpour and Witte Morris~\cite{GM}. If a Hamiltonian cycle in a quotient Cayley graph has voltage generating the cyclic kernel, repeating its label sequence as many times as the order of the kernel gives a Hamiltonian cycle in the original graph.

The same observation applies to a cycle that need not be Hamiltonian. Suppose that \(H=\langle z\rangle\le Z(G)\) has order \(m\), and that a simple labelled cycle of length \(a\) in \(\Cay(G/H,S_H)\) has voltage \(z\). Its \(m\) successive traversals visit every vertex above that cycle exactly once before returning: each traversal starts at the next translate by \(z\), and the distinct quotient vertices distinguish positions within a traversal. Thus the lift is an ordinary cycle of length \(ma\) whenever \(ma\ge3\). Here \(S_H=\{sH:s\in S\}\), and we retain the original generator labels, including when quotient steps repeat a vertex or describe the same edge.

We develop a quantitative variant in which the length of the resulting cycle is compared with the length of a longest cycle in the quotient, whose voltage need not generate the kernel. In the main case of Lemma~\ref{lem:cyclicextension}, a short simple labelled cycle supplies the generating voltage. Lemma~\ref{lem:longear} supplies a long path with distinct endpoints on this short cycle and all internal vertices outside it. Lemma~\ref{lem:thetalifting} then uses the two arcs of the short cycle to join at least half of the lifts of this path into one cycle. Finally, Lemma~\ref{lem:shortcentral} extends the bound to a central element represented by a short word, with the loss in the proportion of vertices covered bounded in terms of the word length.

We will use the following elementary connectivity fact.

\begin{lemma}\label{lem:twoconnected}
Every connected vertex-transitive graph on at least three vertices is 2-connected and contains a cycle. For every nonempty graph \(X\), we have \(p(X)\ge c(X)\).
\end{lemma}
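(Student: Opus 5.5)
The plan is to deduce 2-connectivity from the edge-connectivity argument already used in Lemma~\ref{lem:tree}, then obtain a cycle from 2-connectivity. The inequality \(p(X)\ge c(X)\) is a direct check.

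\emph{2-connectivity.} Let \(X\) be connected and vertex-transitive on \(n\ge3\) vertices, with common degree \(d\ge1\). I will suppose, for a contradiction, that some vertex \(u\) is a cut vertex, so that \(X-\{u\}\) has \(h\ge2\) components. Lemma~\ref{lem:tree} shows that \(X\) is 1-tough, and applying this with \(A=\{u\}\) gives \(h\le1\), a contradiction.

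If one prefers to argue directly, Mader's theorem says each component of \(X-\{u\}\) sends at least \(d\) edges to \(\{u\}\), while \(u\) has only \(d\) edges in total. Then \(hd\le d\), which again forces \(h\le 1\). Since \(X\) has at least three vertices and no cut vertex, it is 2-connected.

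\emph{A cycle exists.} Take an edge \(uv\); the degree is positive because \(X\) is connected with \(n\ge2\). The graph \(X-\{u\}\) is connected and has at least two vertices, so \(v\) has a neighbour \(w\ne u\) in \(X\). Because \(u\) is not a cut vertex, there is a \(v\)–\(w\) path in \(X-\{u\}\), and hence a \(w\)–\(u\) path avoiding \(v\) exists, since \(X-\{v\}\) is connected. Combining a \(u\)–\(w\) path in \(X-\{v\}\) with the edges \(wv\) and \(vu\) gives a closed walk through \(v\) that visits \(v\) once.

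Shortcutting this walk yields a cycle. An easier route is the standard fact that a 2-connected graph on at least three vertices has minimum degree at least two, and any graph of minimum degree at least two contains a cycle: follow a maximal path until an endpoint's second neighbour closes it.

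\emph{\(p(X)\ge c(X)\).} If \(c(X)=0\), the bound holds because \(X\) is nonempty, so a singleton path gives \(p(X)\ge1\). Otherwise, deleting one edge of a longest cycle leaves a path with \(c(X)\) vertices.

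I do not expect any genuine obstacle. The only care needed is the degenerate cases: small \(n\), and the convention that a singleton counts as a path of order one.
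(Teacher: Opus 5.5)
Your proof is correct, but it reaches 2-connectivity by a heavier route than the paper.

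\textbf{2-connectivity.} The paper's argument is elementary and needs no external input. Deleting a leaf of a spanning tree leaves the graph connected, so some vertex is not a cut vertex. Vertex-transitivity then transfers this to every vertex. You instead apply 1-toughness from Lemma~\ref{lem:tree} with \(A=\{u\}\), or equivalently Mader's theorem directly via the count \(hd\le d\). This is valid: that lemma precedes this one and covers graphs on at least two vertices. It does, however, rest on Mader's edge-connectivity theorem where a one-line argument suffices. What your route buys is reuse of machinery already in place. The toughness bound also says more, namely at most \(|A|\) components after deleting any \(A\), although only \(A=\{u\}\) is needed here.

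\textbf{The cycle.} The paper merely asserts that a cycle exists, and your explicit construction is fine. The closed walk you build, a \(u\)--\(w\) path in \(X-\{v\}\) followed by \(wv\) and \(vu\), is already a cycle of length at least three, so no shortcutting is needed. The intermediate remark about a \(v\)--\(w\) path in \(X-\{u\}\) is redundant, since \(vw\) is an edge, and the ``hence'' there does not follow from it. This is harmless, because the step you actually use rests on \(X-\{v\}\) being connected.

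\textbf{The inequality \(p(X)\ge c(X)\).} Your argument matches the paper's.
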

\begin{proof}
Deleting a leaf of a spanning tree leaves the graph connected. By vertex-transitivity, deleting any vertex does so, proving 2-connectivity. Such a graph contains a cycle. Deleting an edge of any cycle gives a path on the same vertices.
\end{proof}

\subsection{Finding and lifting a long ear}

The \emph{internal vertices} of a path are its vertices other than
its endpoints. An \emph{arc} between two distinct vertices of a
cycle is either of the two paths along the cycle joining them.
For an ordinary or simple labelled cycle \(C\), a \(C\)-\emph{ear}
is a path with distinct endpoints on \(C\) and all internal
vertices outside \(C\). For a simple labelled cycle of length two,
its two labelled edges serve as the two arcs. Paths are
\emph{vertex-disjoint} if their vertex sets are disjoint.

We first use a cycle \(D\) longer than \(C\) to find a long
\(C\)-ear \(P\). When the voltage of \(C\) generates the central
cyclic subgroup, we then use its two arcs to join at least half
of the lifts of \(P\) into one cycle. The following form of
Menger's theorem is used in the first step.

\begin{theorem}[Menger~\cite{Menger}]\label{thm:Menger}
If \(A,B\) are disjoint vertex sets, each of size at least two,
in a 2-connected graph, there are two vertex-disjoint
\(A\)--\(B\) paths, with distinct endpoints in each set.
\end{theorem}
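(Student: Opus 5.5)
This is the classical two-vertex-disjoint-paths form of Menger's theorem, and I would derive it from the set-version of Menger's theorem (the minimum size of a set separating \(A\) from \(B\) equals the maximum number of vertex-disjoint \(A\)--\(B\) paths). Here an \(A\)--\(B\) path means a path with first vertex in \(A\), last vertex in \(B\), and no internal vertex in \(A\cup B\). Vertex-disjointness then gives distinct endpoints in each set automatically. So it suffices to show that no single vertex separates \(A\) from \(B\).

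For this separation step, let \(X\) be the 2-connected graph and \(x\) any vertex. Since \(|A|\ge2\) and \(|B|\ge2\), the sets \(A\setminus\{x\}\) and \(B\setminus\{x\}\) are nonempty. Since \(X\) is 2-connected, \(X-x\) is connected. Hence \(X-x\) contains a path from a vertex of \(A\setminus\{x\}\) to a vertex of \(B\setminus\{x\}\). Thus every \(A\)--\(B\) separator has at least two vertices. Disjointness of \(A\) and \(B\) ensures that a separator must actually be used, and no degenerate one-vertex path lies in \(A\cap B\).

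By Menger's theorem there are then two vertex-disjoint paths, each joining \(A\) to \(B\). To make them genuine \(A\)--\(B\) paths, I would shorten each one: keep the segment from the last vertex in \(A\) to the first subsequent vertex in \(B\). Shortening preserves disjointness, and distinct paths yield distinct endpoints in \(A\) and in \(B\).

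The only subtle point is invoking the correct form of Menger's theorem, namely the set version, or equivalently the version with a super-source joined to \(A\) and a super-sink joined to \(B\). Apart from that, the argument is the single observation that 2-connectivity combined with \(|A|,|B|\ge2\) forbids a one-vertex separator.
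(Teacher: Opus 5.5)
Your argument is correct: ruling out a one-vertex \(A\)--\(B\) separator using 2-connectivity and \(|A|,|B|\ge2\), then applying the set form of Menger's theorem, is the standard deduction. The paper gives no proof of this statement and simply cites Menger's theorem; your shortening step matches the truncation the paper performs when it applies the theorem in Lemma~\ref{lem:longear}.
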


\begin{lemma}[Finding a long ear]\label{lem:longear}
Let \(Q\) be a 2-connected graph, let \(C\) be an ordinary or
simple labelled cycle of length \(a\ge2\) in \(Q\), and let
\(D\) be an ordinary cycle in \(Q\) of length \(L>a\).
Then there is a \(C\)-ear \(P\) with
\begin{equation}\label{eq:lonear}
 |E(P)|\ge L/a.
\end{equation}
\end{lemma}
\begin{proof}
Put \(b=|V(C)\cap V(D)|\). If \(b\ge2\), the \(b\) segments
of \(D\) between successive vertices of \(C\) have total length
\(L\). A longest segment is a \(C\)-ear of length at least
\(L/b\ge L/a\).

If \(b=0\), Theorem~\ref{thm:Menger} gives two vertex-disjoint
\(V(C)\)--\(V(D)\) paths. Truncate each at its first vertex of
\(D\) and its last preceding vertex of \(C\), so their interiors
avoid both cycles. Joining them by the longer arc of \(D\)
gives a \(C\)-ear of length at least \(L/2\).

If \(b=1\), write \(V(C)\cap V(D)=\{v\}\). Since \(Q-v\) is
connected, it contains a shortest path between
\(V(C)\setminus\{v\}\) and \(V(D)\setminus\{v\}\). Its interior
avoids both cycles. Append the longer arc of \(D\) from its
endpoint to \(v\), obtaining a \(C\)-ear of length at least
\(L/2\). Since \(a\ge2\), both remaining cases
imply~\eqref{eq:lonear}.
\end{proof}

We now turn to lifting the ear. Suppose that the voltage of
\(C\) generates the central cyclic subgroup. Lifts of the two
arcs of \(C\) connect the endpoints of the lifted ears.
We will select these connections to obtain one cycle containing
at least half of the lifted ears.

Related cycle-joining arguments appear in Ghaderpour and
Witte Morris~\cite[Section~5]{GM}, who join cycles while
controlling their voltages to construct quotient Hamiltonian
cycles to which the Factor Group Lemma applies. Here we need
to control the number of lifted ears retained. The following
elementary lemma gives this count: in its application, the
\(A_i\)- and \(B_i\)-edges represent lifts of the two arcs of
\(C\), and the \(P_i\)-edges represent lifts of \(P\).

\begin{lemma}\label{lem:cyclicmatching}
Let \(m\ge2\) and \(0\le h<m\) be integers. Form an undirected
labelled multigraph with vertices
\[
 \{x_i,y_i:i\in\Z/m\Z\}
\]
and edges
\[
 A_i=x_iy_i,\qquad
 B_i=y_ix_{i+1},\qquad
 P_i=x_iy_{i+h}.
\]
Edges with different labels are retained even when they join
the same pair of vertices. Then this graph contains a simple
cycle of length at least four using at least \(m/2\) of the
\(P_i\)'s.
\end{lemma}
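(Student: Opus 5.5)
The plan is to read the multigraph as a Hamiltonian \(2m\)-cycle \(x_0y_0x_1y_1\cdots x_{m-1}y_{m-1}\), made of the \(A\)- and \(B\)-edges, together with a perfect matching of \(P\)-edges. The cycle will be built from the \(2\)-factor formed by the \(A\)- and \(P\)-edges, using \(B\)-edges to splice its components together.

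\emph{Step 1: the \(A\cup P\) factor.} Every vertex is incident with exactly one \(A\)-edge and one \(P\)-edge, so these edges form a \(2\)-factor. Starting at \(x_i\), it runs
\[
x_i \xrightarrow{P_i} y_{i+h} \xrightarrow{A_{i+h}} x_{i+h} \xrightarrow{P_{i+h}} y_{i+2h} \to\cdots .
\]
Put \(d=\gcd(m,h)\), with \(d=m\) when \(h=0\). The components are then cycles \(Z_0,\dots,Z_{d-1}\), where \(Z_r\) contains exactly the \(x_j,y_j\) with \(j\equiv r\pmod d\) and exactly the \(m/d\) edges \(P_j\) with \(j\equiv r\pmod d\). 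When \(h=0\), each \(Z_r\) is the \(2\)-cycle formed by the parallel edges \(A_r\) and \(P_r\). If \(d=1\), then \(Z_0\) is a simple cycle of length \(2m\ge4\) using all \(m\) of the \(P_i\), and we are done. So assume \(d\ge2\).

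\emph{Step 2: splicing the components with \(B\)-edges.} For each \(r\), deleting \(A_r\) from \(Z_r\) leaves a path \(Z_r-A_r\) from \(x_r\) to \(y_r\). Consider the closed walk
\[
x_d \leadsto y_0 \xrightarrow{B_0} x_1 \leadsto y_1 \xrightarrow{B_1} x_2 \leadsto\cdots\leadsto y_{d-1}\xrightarrow{B_{d-1}} x_d .
\]
Here \(x_r\leadsto y_r\) denotes the whole path \(Z_r-A_r\) for \(1\le r\le d-1\). The first segment \(x_d\leadsto y_0\) is the terminal subpath of \(Z_0-A_0\) from \(x_d\) to \(y_0\); this is meaningful because \(x_d\in Z_0\) as \(d\equiv0\pmod d\), with \(x_d=x_0\) when \(d=m\). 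The paths used lie in the distinct vertex-disjoint cycles \(Z_0,\dots,Z_{d-1}\). Consecutive pieces meet only at the prescribed endpoints, and the last \(B\)-edge returns to the starting vertex \(x_d\). Hence the walk is a simple cycle.

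\emph{Step 3: counting.} The vertices \(x_d,y_0,x_1,y_1\) are distinct because \(d\ge2\), so the cycle has length at least four. This uses the fact that edges with different labels count separately, which covers the degenerate case \(d=m=2\). The cycle contains every \(P\)-edge of \(Z_1,\dots,Z_{d-1}\). That gives at least
\[
(d-1)\frac md\ \ge\ \frac m2
\]
of the \(P_i\), since \(d\ge2\). For \(h=0\) the construction actually recovers all \(m\) edges \(P_i\), in the form \(x_i\xrightarrow{P_i}y_i\xrightarrow{B_i}x_{i+1}\).

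\emph{Main obstacle.} The point I expect to need care is choosing the splice. The naive merge of two components, which swaps \(A_j,A_{j+1}\) for \(B_j\), leaves the endpoints \(x_j\) and \(y_{j+1}\) with no edge between them. The chain through all \(d\) residue classes avoids this. It closes up because the last \(B\)-edge lands at \(x_d\), which lies in \(Z_0\) again. The price is discarding part of \(Z_0\), and this costs at most \(m/d\le m/2\) of the \(P\)-edges. I would carefully verify two things: that \(x_d\) lies on the path \(Z_0-A_0\) in a position for which the terminal segment to \(y_0\) is well defined, including the cases \(x_d=x_0\) and \(x_d\) adjacent to \(y_0\); and that parallel edges in the cases \(h=0\) or \(m=2\) do not break simplicity.
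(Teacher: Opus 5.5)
Your proof is correct and is the paper's construction shifted by one index. The paper keeps the full paths \(\mathcal C_j-A_j\) for \(j=0,\ldots,d-2\) and closes up through an arc of \(\mathcal C_{d-1}\) via \(B_{d-2}\) and \(B_{m-1}\), whereas you keep \(Z_1,\ldots,Z_{d-1}\) and close through an arc of \(Z_0\) via \(B_{d-1}\) and \(B_0\); the remaining differences are cosmetic (you absorb \(h=0\) as the case \(d=m\) instead of treating it separately, and you get length at least four from the distinct vertices \(x_d,y_0,x_1,y_1\) rather than from \(m/d\ge2\)).
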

\begin{proof}
If \(h=0\), the \(P_i\)'s and \(B_i\)'s form a cycle of length
\(2m\) containing every \(P_i\), so assume \(h\ne0\).

Put \(d=\gcd(m,h)\). Following \(P_i\) and then \(A_{i+h}\)
takes \(x_i\) to \(x_{i+h}\). Thus the \(A_i\)'s and \(P_i\)'s
form \(d\) disjoint cycles
\(\mathcal C_0,\ldots,\mathcal C_{d-1}\), where
\(\mathcal C_j\) contains precisely the vertices \(x_i,y_i\)
with \(i\equiv j\pmod d\). Each cycle contains \(m/d\) of the
\(P_i\)'s. If \(d=1\), this already gives the required cycle.

Suppose \(d\ge2\). For \(j=0,\ldots,d-2\), delete \(A_j\)
from \(\mathcal C_j\), leaving an \(x_j\)--\(y_j\) path \(R_j\)
that contains every \(P_i\) in that component. The edges
\(B_0,\ldots,B_{d-3}\) join these paths in order into an
\(x_0\)--\(y_{d-2}\) path; when \(d=2\), this is just \(R_0\).

Both \(x_{d-1}\) and \(y_{m-1}\) lie on the remaining cycle
\(\mathcal C_{d-1}\). The edges
\(B_{d-2}=y_{d-2}x_{d-1}\) and \(B_{m-1}=y_{m-1}x_0\)
connect the endpoints of the preceding path to these vertices.
Closing it along either arc of \(\mathcal C_{d-1}\) gives
a simple cycle, since the path is disjoint from
\(\mathcal C_{d-1}\).

This cycle contains every \(P_i\) in the first \(d-1\)
components, hence at least
\[
 \frac{d-1}{d}m\ge\frac m2
\]
of the \(P_i\)'s. Since \(h\ne0\), we have \(m/d\ge2\), so
it contains at least two \(P_i\)-edges with disjoint endpoints
and therefore has length at least four.
\end{proof}

We now identify the graph in Lemma~\ref{lem:cyclicmatching}
among the lifted paths.

\begin{lemma}[Lifting an ear through a cyclic quotient]
\label{lem:thetalifting}
Let \(X=\Cay(G,S)\), and let \(H\le Z(G)\) be cyclic of order
\(m\ge2\). Suppose that \(C\) is a simple labelled cycle of
length at least two in \(Q=\Cay(G/H,S_H)\), with voltage
generating \(H\). If \(P\) is a \(C\)-ear with at least two
edges, then \(X\) contains a simple cycle containing at least
\(m/2\) vertex-disjoint lifts of \(P\). In particular,
\begin{equation}\label{eq:thetalifting}
 c(X)\ge\frac m2\bigl(|E(P)|+1\bigr).
\end{equation}
\end{lemma}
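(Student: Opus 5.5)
Let \(u\ne v\) be the endpoints of \(P\) on \(C\), and let \(\alpha\) and \(\beta\) be the two arcs of \(C\), with \(\alpha\) running from \(u\) to \(v\) and \(\beta\) from \(v\) back to \(u\). For a simple labelled cycle of length two, these are its two labelled edges. The plan is to realise the multigraph of Lemma~\ref{lem:cyclicmatching} inside \(X\). Its edges will be lifts of \(\alpha\), \(\beta\) and \(P\), each lift carrying the labels it has in \(Q\). Applying that lemma then gives the cycle.

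\emph{Setting up the multigraph.} Read the labels of \(C\) starting at \(u\), so that the voltage equals \(ab\), where \(a\) and \(b\) are the label products of \(\alpha\) and \(\beta\). By hypothesis, and since the voltage is unchanged under cyclic shifts by centrality, \(z:=ab\) generates \(H\). Let \(w\) be the label product of \(P\). Both \(\alpha\) and \(P\) run from \(u\) to \(v\) in \(Q\), so \(w=ah_0\) for some \(h_0\in H\); write \(h_0=z^h\) with \(0\le h<m\). Fix \(g\) in the fibre over \(u\) and put
\[
 x_i=z^ig,\qquad y_i=z^iga\qquad(i\in\Z/m\Z).
\]
The \(x_i\) are distinct points of the fibre over \(u\), and the \(y_i\) are distinct points of the fibre over \(v\). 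Let \(A_i\) be the lift of \(\alpha\) from \(x_i\); it ends at \(y_i\). Let \(B_i\) be the lift of \(\beta\) from \(y_i\); it ends at \(z^igab=z^{i+1}g=x_{i+1}\). Let \(P_i\) be the lift of \(P\) from \(x_i\); it ends at \(z^igaz^h=y_{i+h}\), using centrality. This is exactly the configuration of Lemma~\ref{lem:cyclicmatching}.

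\emph{Disjointness of interiors (main check).} Next I would verify that any simple cycle in the multigraph becomes a simple cycle in \(X\) once each edge is replaced by its path. For this, distinct lifts must have pairwise disjoint interiors, and the interiors must avoid all \(x_i\) and \(y_i\). The interior of each \(P_i\) lies over internal vertices of \(P\), which are outside \(C\). The interiors of the \(A_i\) and \(B_i\) lie over \(V(C)\setminus\{u,v\}\), and the interior quotient vertices of \(\alpha\) and \(\beta\) are disjoint because \(C\) is simple. So lifts of different paths have disjoint interiors. The lifts of one path are left translates of each other by distinct elements of \(H\). Since \(H\) acts freely on each fibre and the path visits distinct quotient vertices, these translates are vertex-disjoint; for \(P_i\) this gives the asserted vertex-disjointness of the lifts of \(P\). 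Finally, \(P\) has at least two edges, so each \(P_i\) has an internal vertex and is never a single edge coinciding with an \(A_j\) or \(B_j\). When the arcs are single edges, \(A_i\) and \(B_j\) cannot be the same edge of \(X\): the vertex sets \(\{x_i,y_i\}\) and \(\{y_j,x_{j+1}\}\) can only agree if \(i=j\) and \(i=i+1\), which is impossible for \(m\ge2\). Together with the cycle having length at least four in the multigraph, this makes the substituted cycle a genuine simple cycle of length at least three.

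\emph{Conclusion.} Lemma~\ref{lem:cyclicmatching} gives a simple cycle using at least \(m/2\) of the \(P_i\). Its substitution is a cycle in \(X\) containing at least \(m/2\) vertex-disjoint lifts of \(P\), each with \(|E(P)|+1\) vertices, which proves~\eqref{eq:thetalifting}. I expect the only delicate point to be the disjointness bookkeeping above, especially the degenerate case where \(C\) has length two and its arcs are single labelled edges.
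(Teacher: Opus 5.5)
Your proposal is correct and follows essentially the same route as the paper: you realise the multigraph of Lemma~\ref{lem:cyclicmatching} by translating lifts of the two arcs of \(C\) and of the ear \(P\) by powers of the voltage \(z\), check that the lifted paths have pairwise disjoint interiors avoiding the fibres above the endpoints of \(P\), and substitute them into the cycle that lemma provides. Computing \(h\) from \(w=az^h\) and using freeness of \(H\) for disjointness of translates are slightly more explicit versions of the paper's steps. Your extra check that some \(A_i\) and \(B_j\) cannot be the same edge of \(X\) is harmless, but it is already implied by the fact that the substituted closed walk has distinct vertices and length at least four.
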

\begin{proof}
Let \(x,y\) be the endpoints of \(P\). Orient \(C\), starting
at \(x\), write its voltage as \(z\), and denote its two
oriented arcs by
\[
 x\xrightarrow{A}y,\qquad y\xrightarrow{B}x.
\]
Choose a lift \(x_0\) of \(x\), and let \(y_0\) be the endpoint
of the lift of \(A\) starting at \(x_0\). Since \(z\) generates
\(H\), the fibres above \(x\) and \(y\) are
\[
 x_i=z^ix_0,\qquad y_i=z^iy_0
 \qquad(i\in\Z/m\Z).
\]

Choose labels for \(P\), oriented from \(x\) to \(y\). Its lift
from \(x_0\) ends at \(y_h\) for a unique integer \(0\le h<m\).
Translation by powers of \(z\) gives the lifted paths
\[
 x_i\xrightarrow{A_i}y_i,\qquad
 y_i\xrightarrow{B_i}x_{i+1},\qquad
 x_i\xrightarrow{P_i}y_{i+h}.
\]
Distinct lifts of a fixed labelled path are vertex-disjoint:
a common vertex would occur at the same position in the
quotient path, and reversing the preceding labels would give
the same starting vertex. Also, the interiors of \(A,B,P\)
in the quotient are pairwise disjoint and avoid \(x,y\).
Hence the interiors of the displayed lifted paths are
pairwise disjoint and avoid the fibres above \(x\) and \(y\).

Treating these lifted paths as single labelled edges gives
exactly the graph in Lemma~\ref{lem:cyclicmatching}. That lemma
provides a cycle using at least \(m/2\) of the \(P\)-edges.
Replacing its edges by the corresponding lifted paths gives
a simple cycle in \(X\) containing the selected lifts of \(P\).
These lifts are vertex-disjoint and each has \(|E(P)|+1\)
vertices, proving~\eqref{eq:thetalifting}.
\end{proof}

\subsection{Lifting over a central cyclic subgroup}
Lemmas~\ref{lem:longear} and~\ref{lem:thetalifting} give the main case of the following bound.

\begin{lemma}[Lifting over a cyclic subgroup of the centre]\label{lem:cyclicextension}
Let \(X=\Cay(G,S)\), let \(H\le Z(G)\) be cyclic of order \(m\), and put \(Q=\Cay(G/H,S_H)\). Suppose that \(|V(Q)|\ge3\). If a simple labelled cycle of length \(a\) in \(Q\) has voltage generating \(H\), then
\begin{equation}\label{eq:cyclicextension}
 c(X)\ge\frac{m}{2a}\,c(Q).
\end{equation}
\end{lemma}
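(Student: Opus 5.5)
We split according to \(m\), the length \(a\) of the given labelled cycle \(C\), and how \(L:=c(Q)\) compares with \(a\). Throughout, write \(z\) for the voltage of \(C\), so \(H=\langle z\rangle\).

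\textbf{Preliminary reductions.} If \(m=1\), then \(X=Q\), and~\eqref{eq:cyclicextension} is immediate since \(1/(2a)\le 1\). So assume \(m\ge2\). Since \(|V(Q)|\ge3\) and \(Q\) is a connected Cayley graph, Lemma~\ref{lem:twoconnected} shows that \(Q\) is 2-connected and \(L\ge3\).

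\textbf{Short cycle case, \(a\ge2\) and \(L\le a\).} Use the Factor Group observation recorded before Lemma~\ref{lem:twoconnected}. Repeating the labels of \(C\) \(m\) times from any lift gives an ordinary cycle of length \(ma\), because \(ma\ge4\ge3\). Hence
\[
 c(X)\ge ma\ge \frac m2\ge\frac{mL}{2a}.
\]

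\textbf{Main case, \(a\ge2\) and \(L>a\).} Let \(D\) be a longest cycle of \(Q\). Lemma~\ref{lem:longear} gives a \(C\)-ear \(P\) with \(|E(P)|\ge L/a>1\). Since \(|E(P)|\) is an integer, \(P\) has at least two edges. Lemma~\ref{lem:thetalifting} then applies, as \(C\) has length at least two and its voltage generates \(H\). It gives
\[
 c(X)\ge \frac m2\bigl(|E(P)|+1\bigr)\ge \frac{m}{2}\cdot\frac La,
\]
which is~\eqref{eq:cyclicextension}.

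\textbf{Case \(a=1\): the expected obstacle.} This case needs a separate argument, because Lemma~\ref{lem:longear} requires \(a\ge2\) and Lemma~\ref{lem:thetalifting} needs a cycle of length at least two. Here \(C\) is a single label \(s\in S\) with \(sH=H\), so \(s\in H\). Since the voltage of \(C\) is \(s\), we have \(s=z\) up to the choice of generator. Thus each fibre is traversed by \(s\)-edges \(g\to gz\), and these form an \(m\)-cycle when \(m\ge3\), or a single edge when \(m=2\). The aim is \(c(X)\ge mL/2\).

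\begin{enumerate}
\item Write \(D=v_1v_2\cdots v_Lv_1\) with fixed labels.
\item Lift the path \(v_1\cdots v_L\) from \(x_0\in\) the fibre over \(v_1\), and translate by powers of \(z\). This gives \(m\) disjoint paths \(P_i\) from \(x_i=z^ix_0\) to the fibre over \(v_L\).
\item Let \(A'_i\) be the lift of the closing edge \(v_Lv_1\) that ends \(P_i\). Then \(P_i\) followed by \(A'_i\) runs \(x_i\to x_{i+h}\) for some fixed \(h\).
\item Put \(y_i\) for the vertex over \(v_L\) at which \(P_{i-h}\) ends. Regard \(P_{i-h}\) as the edge \(x_{i-h}y_i\), \(A'\) as the edge \(y_i x_i\), and the \(s\)-edge as the edge \(x_i x_{i+1}\).
\item Rerun the merging argument of Lemma~\ref{lem:cyclicmatching}. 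The \(P\)- and \(A'\)-edges form \(d=\gcd(m,h)\) disjoint cycles. For \(d\ge2\), open \(d-1\) of them at an \(A'\)-edge and chain the resulting paths through \(s\)-edges between consecutive residues \(x_j x_{j+1}\). Then close along an arc of the last cycle.
\end{enumerate}

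This retains at least \(m/2\) of the lifts of \(P\), each with \(L\) vertices, so \(c(X)\ge mL/2\). The points I expect to need care are that the \(s\)-edges within the fibre over \(v_1\) avoid the interiors of all lifts, and that the relabelling matches the configuration of Lemma~\ref{lem:cyclicmatching}. I would verify both directly, or redo its short proof in this setting, rather than cite the lemma verbatim.
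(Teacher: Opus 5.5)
Your treatment of \(m=1\) and of \(a\ge2\) (the short-cycle subcase, and the main subcase via Lemma~\ref{lem:longear} and Lemma~\ref{lem:thetalifting}) is exactly the paper's argument. The case \(a=1\), however, has a genuine gap. The configuration you build is not the one in Lemma~\ref{lem:cyclicmatching}, and the merging step fails.

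In that lemma the connecting edges \(B_i=y_ix_{i+1}\) run from the end fibre to the start fibre. This is what lets a component, opened at an \(A\)-edge, be continued at both ends. In your graph the only connecting edges are the \(s\)-edges \(x_ix_{i+1}\) over \(v_1\). Every \(y_i\) over \(v_L\) has degree two: its \(P\)-edge and its \(A'\)-edge, both inside its own \(P/A'\)-component. So a path obtained by opening a component at an \(A'\)-edge has a dead end at its \(y\)-endpoint. It can be joined to at most one further path through its \(x\)-endpoint, and the chain can never be closed through another component.

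The extreme case is \(h=0\), i.e. when \(D\) has trivial voltage. Then the \(m\) lifts of \(D\) are disjoint \(L\)-cycles, each meeting the fibre over \(v_1\) only in \(x_i\). Each \(x_i\) is a cut vertex of your subgraph, so no cycle in it contains two lifts. This does occur, e.g. for \(G=\Z/m\Z\times\Z/k\Z\), \(S=\{(\pm1,0),(0,\pm1)\}\), \(H=\langle(1,0)\rangle\). There \(Q\) is a \(k\)-cycle, every traversal of it has trivial voltage, and the given length-one labelled cycle is \((1,0)\). So \(c(X)\ge mL/2\) does not follow from your construction. Redoing the proof of Lemma~\ref{lem:cyclicmatching} cannot fix this, because the needed \(y\)-to-\(x\) connections are absent.

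The missing idea is that \(z=s\in S\) is central, so every vertex \(g\) of \(X\) is adjacent to \(gz\). There are \(z\)-edges over every fibre, not only over \(v_1\). Using them at the other end as well, and discarding the closing edge of \(D\), repairs the argument.

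The paper does this with a longest path of \(Q\) lifted to \(g_1,\dots,g_k\), where \(k=p(Q)\ge3\). The vertices \(z^ug_i\) with \(0\le u<m'=2\lfloor m/2\rfloor\) form an \(m'\times k\) grid. Its rows are translates of the lifted path and its columns are joined by \(z\)-edges. A grid with an even number of rows is Hamiltonian, giving \(c(X)\ge m'k\ge\frac m2p(Q)\ge\frac m2c(Q)\).
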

\begin{proof}
The case \(m=1\) is immediate, since then \(X=Q\). Assume \(m\ge2\), and let \(C\) be the specified labelled cycle.

Suppose first that \(a\ge2\). If \(c(Q)\le a\), repeating \(C\) gives a cycle of length \(ma\ge4\), which proves the bound. Otherwise, \(Q\) is 2-connected by Lemma~\ref{lem:twoconnected}. Apply Lemma~\ref{lem:longear} to \(C\) and a longest cycle of \(Q\) to obtain a \(C\)-ear \(P\) with \(|E(P)|\ge c(Q)/a>1\). Lemma~\ref{lem:thetalifting} gives
\[
 c(X)\ge\frac m2\bigl(|E(P)|+1\bigr)
 \ge\frac{m}{2a}c(Q).
\]

It remains to treat \(a=1\). Its label is a central generator \(z\in S\) of \(H\). Lift a longest path in \(Q\) to vertices \(g_1,\ldots,g_k\), where \(k=p(Q)\ge3\), and put \(m'=2\lfloor m/2\rfloor\). The vertices
\[
 z^ug_i\qquad(0\le u<m',\ 1\le i\le k)
\]
form an \(m'\)-by-\(k\) rectangular grid as a subgraph of \(X\): horizontal edges are translates of the lifted path, and vertical edges come from multiplication by \(z\). Since \(m'\) is even, this grid has a Hamiltonian cycle. Indeed, start at the top of the first column, traverse the remaining columns back and forth through successive rows, and return up the first column. Thus
\[
 c(X)\ge m'k\ge\frac m2p(Q)\ge\frac m2c(Q),
\]
as required.
\end{proof}

\subsection{Lifting from a short word for a central element}
Lemma~\ref{lem:cyclicextension} applies when a generator of the cyclic kernel occurs as the voltage of a simple labelled cycle in the quotient. We now extend this to a central element represented by a short \(S\)-word, even if that element does not itself belong to \(S\). Projecting the word to the quotient gives a closed labelled walk, possibly with repeated vertices. We decompose this walk into simple labelled cycles and quotient successively by the subgroups generated by their voltages, applying Lemma~\ref{lem:cyclicextension} at each step. If the original word has length at most \(\ell\), the total length of these cycles is at most \(\ell\), and the losses multiply to at most \(2^\ell\).

Recall that \(\rho(X)=c(X)/|V(X)|\) is the proportion of vertices on a longest cycle in \(X\).
\begin{lemma}[Lifting from a short word for a central element]\label{lem:shortcentral}
Let \(X=\Cay(G,S)\), and suppose that \(z\in Z(G)\) is a product of at most \(\ell\) elements of \(S\), for an integer \(\ell\ge0\). Put \(H=\langle z\rangle\) and \(Q=\Cay(G/H,S_H)\), and suppose that \(|V(Q)|\ge3\). Then
\begin{equation}\label{eq:shortcentral}
 \rho(X)\ge2^{-\ell}\rho(Q).
\end{equation}
\end{lemma}
\begin{proof}
If \(z=1\), then \(X=Q\) has at least three vertices, so~\eqref{eq:shortcentral} is immediate. Otherwise, project a word of length at most \(\ell\) representing \(z\) to \(G/H\). From this closed labelled walk, repeatedly delete a shortest nonempty segment whose initial and final quotient vertices coincide, until no labels remain. Each deleted segment is a simple labelled cycle. Let its word be \(w_i\), its length \(\ell_i\), and its voltage \(h_i\), for \(1\le i\le t\). Then
\[
 h_i\in H\le Z(G),\qquad
 z=h_1\cdots h_t,\qquad
 \sum_{i=1}^t\ell_i\le\ell.
\]
The product identity holds because the product of a deleted segment is central: deleting it removes a factor \(h_i\) from the product of the current word. In particular, the elements \(h_i\) generate \(H\).

Set \(H_0=\{1\}\), \(H_i=\langle h_1,\ldots,h_i\rangle\), and
\[
 X_i=\Cay(G/H_i,S_{H_i})\qquad(0\le i\le t).
\]
Thus \(X_0=X\) and \(X_t=Q\), and every \(X_i\) has at least three vertices. Omit steps with \(H_i=H_{i-1}\). At every remaining step, the word \(w_i\) gives a simple labelled cycle of length \(\ell_i\) in \(G/H_i\): its vertices before the final return are distinct modulo \(H\), hence also modulo \(H_i\le H\). In the map \(G/H_{i-1}\to G/H_i\), its voltage generates the central cyclic kernel \(H_i/H_{i-1}\). Dividing~\eqref{eq:cyclicextension} by the group orders gives
\begin{equation}\label{eq:wordstep}
 \rho(X_{i-1})\ge\frac{1}{2\ell_i}\rho(X_i).
\end{equation}

Multiplying~\eqref{eq:wordstep} and using \(2u\le2^u\) for positive integers \(u\), the total loss is at most
\[
 \prod_i2\ell_i\le2^{\sum_i\ell_i}\le2^\ell,
\]
where the product is over the nontrivial steps. This proves~\eqref{eq:shortcentral}.
\end{proof}

\section{Long cycles in nilpotent Cayley graphs}\label{sec:nilpotent}
Every connected Cayley graph of an abelian group of order at least three is Hamiltonian~\cite{Marusic}. We use this as the base case of an induction on nilpotency class. At each step, we quotient by the last nontrivial lower central term, find a long cycle by induction, and lift it through successive cyclic central quotients using Lemma~\ref{lem:shortcentral}. We first show that the required cyclic kernels have short generators.

For a positive integer \(m\), write \(\nu(m)\) for its number of distinct prime divisors, with \(\nu(1)=0\). The next lemma combines the Burnside basis theorem with the standard generation theorem for lower central factors. It gives a generating set for each lower central factor with both the number of generators and their word lengths controlled.

\begin{lemma}[Short generators for lower central factors]\label{lem:shortgenerators}
Let \(G\) be a finite nilpotent group of rank at most \(r\), where \(r\ge1\), and let \(S\) be a symmetric generating set of \(G\). For every integer \(j\ge1\), the quotient
\[
 A_j=\gamma_j(G)/\gamma_{j+1}(G)
\]
is generated by the images of at most \(r^j\nu(|G|)\) weight-\(j\) simple commutators in \(S\). Each selected commutator is represented by an \(S\)-word of length at most
\[
 \lambda_j=3\cdot2^{j-1}-2.
\]
\end{lemma}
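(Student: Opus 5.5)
The argument has three ingredients: the standard generation result for lower central factors, a reduction to $p$-groups that turns it into a statement about vector spaces, and a word-length recursion.

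\emph{Generation.} By the standard result~\cite[Section~5.1]{Robinson}, $A_j$ is generated by the images of weight-$j$ simple commutators $[u_1,\ldots,u_j]$ with all $u_i$ in a generating set. Modulo $\gamma_{j+1}(G)$, such a commutator is multiplicative in each entry, because $A_j$ is central in $G/\gamma_{j+1}(G)$. So we may take the entries in any generating set of $G$. We will take them in $S$, but we need to select few of them.

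\emph{Reduction to primes.} Write $G=\prod_p G_p$ as the product of its Sylow subgroups. Then $A_j=\prod_p A_j^{(p)}$, where $A_j^{(p)}=\gamma_j(G_p)/\gamma_{j+1}(G_p)$, and this product has at most $\nu(|G|)$ factors. Each $G_p$ is a quotient of $G$, so it has rank at most $r$.

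Fix $p$. By the Burnside basis theorem, $G_p/\Phi(G_p)$ is an $\mathbb F_p$-space of dimension at most $r$. Since the projection $G\to G_p$ sends $S$ onto a generating set, the images of $S$ span this space. Choose $s_1,\ldots,s_d\in S$ with $d\le r$ whose images form a basis. These $s_i$ generate $G_p$ modulo $\Phi(G_p)$, so their projections generate $G_p$.

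The commutators $[s_{i_1},\ldots,s_{i_j}]$, of which there are at most $r^j$, then project to a generating set of $A_j^{(p)}$. In $A_j$ their $p$-components generate $A_j^{(p)}$, but they also carry other-prime components. This is the main obstacle. To handle it, raise each selected commutator to the power $|G|/|G_p|$, which kills the other components while preserving generation of the $p$-part. That, however, lengthens the word.

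Instead, we will use a fact about the finite abelian group $A_j=\bigoplus_p A_j^{(p)}$: if for each $p$ a family generates the $p$-part after projection, the union of all the families generates $A_j$. The subgroup $M$ generated by the union surjects onto each $A_j^{(p)}$. For a finite abelian group, a subgroup surjecting onto every Sylow component is the whole group, since its own Sylow $p$-part maps onto $A_j^{(p)}$ and equals its image there. So the union of the per-prime families, at most $r^j\nu(|G|)$ commutators, generates $A_j$, and no powering is needed. The case $|G|=1$ is trivial.

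\emph{Word length.} Each entry lies in $S$, so it is a word of length $1$. If $c$ has a word of length $\lambda$, then $[c,s]=c^{-1}s^{-1}cs$ has length at most $2\lambda+2$, using $s^{-1}\in S$. The recursion $\lambda_1=1$, $\lambda_{j}=2\lambda_{j-1}+2$ solves to $\lambda_j=3\cdot2^{j-1}-2$, which is the bound in the statement.
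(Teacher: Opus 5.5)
Your proof is correct and follows essentially the same route as the paper: the Sylow decomposition, the Burnside basis theorem to pick at most \(r\) elements of \(S\) per prime, and the standard generation of \(\gamma_j/\gamma_{j+1}\) by weight-\(j\) simple commutators in generators. It then combines the per-prime families by coprimality (you argue via the Sylow parts of the generated subgroup, the paper via \(|A_j|\) dividing \(|B|\)) and closes with the recursion \(\lambda_{j+1}=2\lambda_j+2\). Phrasing the selection step through a basis of \(G_p/\Phi(G_p)\) is just the content of the Burnside basis theorem that the paper cites.
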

\begin{proof}
The case \(G=\{1\}\) is immediate. A finite nilpotent group is
the direct product of its Sylow subgroups~\cite[Theorem~5.2.4]{Robinson}.
Thus the coordinates multiply independently when we identify
\[
 G=\prod_{p\mid|G|}P_p,
\]
where \(P_p\) is the unique Sylow \(p\)-subgroup of \(G\), that is, the subgroup whose order is the largest power of \(p\) dividing \(|G|\). Projection onto this factor shows that its rank is at most \(r\). The Burnside basis theorem~\cite[Theorem~5.3.2]{Robinson} implies that every generating set of a finite \(p\)-group, a group of prime-power order, contains a generating subset of minimum size. Applied to the projection of \(S\), it therefore gives a set \(S_p\subseteq S\) of at most \(r\) elements whose projections generate \(P_p\).

Fix \(j\ge1\). The at most \(r^j\) weight-\(j\) simple commutators in \(S_p\) project to generators of \(\gamma_j(P_p)/\gamma_{j+1}(P_p)\), by~\cite[Theorem~5.2.5 and the following paragraph]{Robinson}. Commutators are computed componentwise, so
\[
 A_j\cong\prod_{p\mid|G|}\bigl(\gamma_j(P_p)/\gamma_{j+1}(P_p)\bigr).
\]
Let \(B\le A_j\) be generated by the images of all these commutators, for all \(p\mid|G|\). Then \(B\) projects onto every displayed factor. Their orders are pairwise coprime and each divides \(|B|\), so \(|A_j|\) divides \(|B|\). Hence \(B=A_j\), and at most \(r^j\nu(|G|)\) commutators have been used.

Finally, expanding a simple commutator gives word-length bounds \(L_1=1\) and \(L_{k+1}=2L_k+2\) for \(k\ge1\). Thus \(L_j=3\cdot2^{j-1}-2=\lambda_j\). All letters and their inverses lie in \(S\), since \(S\) is symmetric.
\end{proof}

We now apply the lemma to bound the total loss in these successive lifts. Recall that \(\rho(X)=c(X)/|V(X)|\) is the proportion of vertices on a longest cycle. For positive integers \(r,c\), put
\[
 K_{r,c}=\sum_{j=2}^c r^j\lambda_j,
 \qquad \lambda_j=3\cdot2^{j-1}-2,
\]
so that \(K_{r,1}=0\). The summand \(r^j\lambda_j\) accounts for the number and lengths of the words used at level \(j\), for each distinct prime divisor of \(|G|\).

\begin{theorem}[Long cycles in nilpotent Cayley graphs]\label{thm:nilpotent}
Let \(r,c\) be positive integers. Suppose that \(G\) is a finite nilpotent group of order at least three, rank at most \(r\), and class at most \(c\), and that \(S\) is a symmetric generating set of \(G\). Then
\begin{equation}\label{eq:nilpotent}
 \rho\bigl(\Cay(G,S)\bigr)\ge2^{-K_{r,c}\nu(|G|)}.
\end{equation}
Consequently, \(\Cay(G,S)\) contains a cycle and a path, each with at least \(|G|\,2^{-K_{r,c}\nu(|G|)}\) vertices.
\end{theorem}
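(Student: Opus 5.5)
We prove~\eqref{eq:nilpotent} by induction on the class bound \(c\), with \(r\) fixed. When \(c=1\), the group \(G\) is abelian of order at least three. By~\cite{Marusic}, \(\Cay(G,S)\) is Hamiltonian, so \(\rho=1=2^{0}\), matching \(K_{r,1}=0\). Now let \(c\ge2\). If \(\gamma_c(G)=\{1\}\), then \(G\) has class at most \(c-1\), and the bound follows from the induction hypothesis because \(K_{r,c-1}\le K_{r,c}\). So assume \(\gamma_c(G)\ne\{1\}\), and put \(\bar G=G/\gamma_c(G)\). This quotient has rank at most \(r\) and class at most \(c-1\), and \(\gamma_c(G)=\gamma_c(G)/\gamma_{c+1}(G)\) is central.

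The plan is to apply Lemma~\ref{lem:shortgenerators} with \(j=c\). It shows that \(\gamma_c(G)\) is generated by at most \(r^c\nu(|G|)\) elements \(z_1,\ldots,z_k\), each central and each an \(S\)-word of length at most \(\lambda_c\). Set \(H_i=\langle z_1,\ldots,z_i\rangle\) and \(X_i=\Cay(G/H_i,S_{H_i})\). Each \(H_i\) is central, hence normal, and \(H_i/H_{i-1}\) is generated by the central element \(z_iH_{i-1}\) of \(G/H_{i-1}\). The image of \(z_i\) in \(G/H_{i-1}\) is still a product of at most \(\lambda_c\) elements of \(S_{H_{i-1}}\). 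Moreover, \(\Cay\bigl((G/H_{i-1})/(H_i/H_{i-1}),\,\cdot\,\bigr)\) is \(X_i\). Hence Lemma~\ref{lem:shortcentral} gives \(\rho(X_{i-1})\ge2^{-\lambda_c}\rho(X_i)\) whenever \(|V(X_i)|\ge3\). Chaining over \(i=1,\ldots,k\), we get
\[
\rho(\Cay(G,S))\ge2^{-r^c\lambda_c\nu(|G|)}\rho\bigl(\Cay(\bar G,S_{\gamma_c(G)})\bigr).
\]
Next we apply the induction hypothesis to \(\bar G\), using \(\nu(|\bar G|)\le\nu(|G|)\). This yields \(2^{-K_{r,c-1}\nu(|G|)}\), and since \(K_{r,c}=K_{r,c-1}+r^c\lambda_c\), the exponents add up exactly to the claimed bound.

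The main obstacle is the hypothesis \(|V(X_i)|\ge3\), that is, the case where \(|\bar G|\le2\), or more generally some intermediate quotient has order at most two. Since \(\bar G\) is the abelianisation of a nilpotent group of order at least three, this cannot happen when \(c\ge2\) and \(\gamma_c(G)\ne1\). Indeed, if \(|G/\gamma_2(G)|\le2\), then \(G/\gamma_2(G)\) is cyclic. For a nilpotent group this forces \(G\) to be cyclic, hence abelian, contradicting \(\gamma_c(G)\ne1\) with \(c\ge2\). So \(|\bar G|\ge|G/\gamma_2(G)|\ge3\), and every intermediate \(X_i\) has at least \(|\bar G|\ge3\) vertices. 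This also makes the induction hypothesis applicable to \(\bar G\), which needs order at least three. Finally, the consequence follows from \(c(X)=\rho(X)|G|\), which gives the cycle, together with Lemma~\ref{lem:twoconnected}, \(p(X)\ge c(X)\), which gives the path.
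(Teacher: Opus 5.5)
Your proof is correct and follows the paper's argument. You induct on \(c\), use Lemma~\ref{lem:shortgenerators} to generate \(\gamma_c(G)\) by at most \(r^c\nu(|G|)\) central elements of word length at most \(\lambda_c\), chain Lemma~\ref{lem:shortcentral} through the successive cyclic central quotients, and then apply the induction hypothesis to \(G/\gamma_c(G)\).

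The only difference is how you check that every intermediate quotient has at least three elements. The paper observes that \(G/J\) with \(J\le Z(G)\) cannot be cyclic, since \(G\) is nonabelian. You instead use the surjection \(G/H_i\to G/\gamma_2(G)\) together with the true, but less elementary, fact that a nilpotent group with cyclic abelianisation is cyclic. One small slip: you call \(\bar G\) the abelianisation, but \(\bar G=G/\gamma_c(G)\) only maps onto \(G/\gamma_2(G)\) and equals it only when \(c=2\). Your argument only uses that surjection, so the slip does no harm.
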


\begin{proof}[Proof of Theorem~\ref{thm:nilpotent}]
Write \(X=\Cay(G,S)\). We prove~\eqref{eq:nilpotent} by induction on \(c\). If \(G\) is abelian, the cited Hamiltonicity theorem gives \(\rho(X)=1\), proving the bound and, in particular, the base case \(c=1\). If \(G\) has class at most \(c-1\), the induction hypothesis already gives the bound, since \(K_{r,c-1}\le K_{r,c}\). We may therefore assume that \(G\) has class exactly \(c\ge2\).

Put \(H=\gamma_c(G)\le Z(G)\). Since \(\gamma_{c+1}(G)=\{1\}\), Lemma~\ref{lem:shortgenerators} gives generators \(z_1,\ldots,z_t\) of \(H\), with \(t\le r^c\nu(|G|)\), each represented by an \(S\)-word of length at most \(\lambda_c\). Quotient successively by their images, omitting any trivial step. Each kernel is cyclic and central, and its generator is still represented by a word of length at most \(\lambda_c\) in the image of \(S\).

Every intermediate group has at least three elements. Indeed, it has the form \(G/J\) with \(J\le H\le Z(G)\). If \(G/J\) were cyclic, generated by \(gJ\), every element of \(G\) would have the form \(g^a u\) with \(u\in J\). These elements commute because \(J\) is central, contrary to \(G\) being nonabelian. Thus every intermediate group is noncyclic, and Lemma~\ref{lem:shortcentral} applies at every step.

Write \(Q=\Cay(G/H,S_H)\). The group \(G/H\) has rank at most \(r\) and class at most \(c-1\), so the induction hypothesis applies to \(Q\). Combining it with the successive lifting bounds and \(\nu(|G/H|)\le\nu(|G|)\), we obtain
\[
 \rho(X)
 \ge2^{-\lambda_ct}\rho(Q)
 \ge2^{-(r^c\lambda_c+K_{r,c-1})\nu(|G|)}
 =2^{-K_{r,c}\nu(|G|)}.
\]
Multiplying by \(|G|\) gives the cycle bound, and deleting an edge of that cycle gives the path bound.

Finally, the standard estimate \(\nu(m)=O(\log m/\log\log m)=o(\log m)\), which follows from Robin~\cite[Th\'eor\`eme~11]{Robin}, gives
\[
 m\,2^{-K_{r,c}\nu(m)}=m^{1-o(1)}.
\]
Since \(K_{r,c}\) depends only on \(r,c\), this asymptotic bound is uniform over all groups and generating sets in the statement.
\end{proof}

\begin{remark}
The bound in Theorem~\ref{thm:nilpotent} is not optimized. A separate refinement, lifting through the whole centre at each step, gives a cycle containing at least \(\delta_c|G|\) vertices, where \(\delta_c>0\) depends only on \(c\) and no rank bound is needed. We omit the proof of this refinement for clarity of exposition, since the stated bound suffices for Theorem~\ref{thm:main}.
\end{remark}

Cayley graphs of order one or two have a path through every vertex and no ordinary cycle. We handle them directly whenever the path bound is used.

\section{Proof of the main theorem}\label{sec:assembly}
We apply the Tessera--Tointon structure theorem to partition the graph into orbits of small diameter. For small orbits, Babai's contraction lemma reduces the quotient to a nilpotent Cayley graph, to which Theorem~\ref{thm:nilpotent} applies. We then combine this bound with the large-orbit bound from Section~\ref{sec:routes} and a diameter bound for the single-orbit case. Finally, we deduce the cycle bound using Watkins's theorem and the theorem of Bondy and Locke.

\subsection{The structural partition}
We use the following finite consequence of
Tessera and Tointon~\cite[Corollary~2.4]{TT},
with the free-action refinement recorded by
Spanos and Tointon~\cite[Remark~7.3]{SpanosTointon}.

\begin{theorem}[Tessera--Tointon]\label{thm:TT}
For every \(0<\lambda<1\), there exist positive integers \(n_0,r_0,c_0,I_0\), depending only on \(\lambda\), such that every connected vertex-transitive graph \(X\) of order \(n\ge n_0\) has a normal subgroup \(H\normal\Aut(X)\) with the following properties:
\begin{enumerate}[label=\textup{(\roman*)}]
\item every \(H\)-orbit has diameter at most \(n^\lambda\), measured in \(X\);
\item \(Q=X/H\) admits a nilpotent group \(N\le\Aut(Q)\) of rank at most \(r_0\) and class at most \(c_0\), acting freely with at most \(I_0\) orbits on \(V(Q)\).
\end{enumerate}
\end{theorem}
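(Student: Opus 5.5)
This statement is imported rather than proved from scratch, so the plan is to derive it from the finite structure theorem of Tessera and Tointon~\cite[Corollary~2.4]{TT}. That corollary says, roughly, the following. Given \(\lambda\), for \(n\) large, any connected vertex-transitive graph \(X\) of order \(n\) whose diameter satisfies a suitable polynomial-growth condition admits a normal subgroup \(H\normal\Aut(X)\) with the following properties: its orbits have diameter at most a small multiple of \(\diam(X)\), and \(X/H\) carries a transitive-up-to-bounded-index action of a virtually nilpotent group of bounded rank and class.

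I would split into two cases according to the size of \(\diam(X)\).

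\emph{Small diameter.} If \(\diam(X)\le n^\lambda\), take \(H=\Aut(X)\). Then there is a single orbit, of diameter at most \(n^\lambda\). The quotient \(Q\) is a single vertex, and the trivial group \(N\) acts freely on it with one orbit, so (i) and (ii) hold trivially.

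\emph{Large diameter.} If \(\diam(X)>n^\lambda\), then \(n\le \diam(X)^{1/\lambda}\), which is precisely the polynomial-growth hypothesis of the Tessera--Tointon corollary with exponent depending only on \(\lambda\). I would apply that corollary with the orbit-diameter parameter chosen so that the orbits of \(H\) have diameter at most \(n^\lambda\). This should be possible because the corollary lets the scale be chosen freely below \(\diam(X)\), at a cost in the bounds depending on that choice. The output is an action of \(\Aut(X)/H\) on \(Q=X/H\) containing a nilpotent normal subgroup of bounded index, rank and class.

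\emph{Upgrading to a free action.} The remaining step is to pass from "a nilpotent subgroup of bounded index acting on \(Q\)" to "a nilpotent \(N\) acting freely with boundedly many orbits". Here I would invoke Spanos and Tointon~\cite[Remark~7.3]{SpanosTointon}, which records that after possibly enlarging \(H\) by a further normal subgroup with orbits of comparable diameter, the vertex stabilisers in the nilpotent part become trivial. Then \(N\) acts freely. Its number of orbits is at most the index bound, since the full group acts transitively on \(V(Q)\) and \(N\) has bounded index. Rank and class pass to subgroups and quotients with bounds depending only on the original ones; the rank of a subgroup of bounded index in a nilpotent group of bounded rank is bounded, for instance via Schreier's bound. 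All constants \(n_0,r_0,c_0,I_0\) then depend only on \(\lambda\).

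I expect the main obstacle to be this last step. Keeping the action free while preserving the orbit-diameter bound requires tracking the exact form of the cited remark, in particular that the extra quotient does not inflate orbit diameters beyond \(n^\lambda\). This may force applying the corollary with \(\lambda/2\) in place of \(\lambda\) to leave room.
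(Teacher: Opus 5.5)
Your proposal follows essentially the paper's route: the paper also obtains this statement by citing Tessera--Tointon's Corollary~2.4 together with the Spanos--Tointon free-action refinement, and your orbit count via bounded index and your Schreier bound on rank are fine. The only difference is that the paper feeds the free version of the finitary Theorem~2.3 directly into the proof of Corollary~2.4, which it uses for every sufficiently large \(X\) and which keeps \(H\) and all the quantitative bounds unchanged; so your small-diameter case split, though correct, is unnecessary, and the inflation of orbit diameters you worried about (which would call for \(\lambda/2\)) does not arise.
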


\noindent\emph{Remark.}
Spanos and Tointon~\cite[Remark~7.3]{SpanosTointon}
observe that the nilpotent subgroup in
Tessera and Tointon's finitary structure
theorem~\cite[Theorem~2.3]{TT} can be chosen to act freely,
while retaining the quantitative bounds.
Using this refinement in the proof of
\cite[Corollary~2.4]{TT} gives the free action in~\textup{(ii)}.

\subsection{Babai's contraction lemma}\label{sec:actions}
For \(Q\) and \(N\) supplied by Theorem~\ref{thm:TT}, the free action need not be transitive, so \(Q\) need not be a Cayley graph of \(N\). Babai's lemma below gives a Cayley graph of \(N\) by contracting connected parts of \(Q\). It therefore allows us to apply Theorem~\ref{thm:nilpotent}.

For a graph \(Y\) and a partition \(\mathcal P\) of \(V(Y)\) into nonempty connected sets, write \(Y_{\mathcal P}\) for the graph with vertex set \(\mathcal P\), where distinct parts are adjacent if an edge of \(Y\) joins them. This uses the same adjacency rule as the orbit quotient in Section~\ref{sec:actionnotation}, but the parts need not be subgroup orbits. Replacing each part by a vertex in this way is called \emph{contracting} the parts; edges within a part are discarded, and there is only one edge between each adjacent pair of parts.

We use the following lemma of Babai; see also Seifter and Woess~\cite[Lemma~1]{SW}.

\begin{theorem}[Babai's contraction lemma~\cite{BabaiContraction}]\label{thm:BabaiContraction}
If a finite group \(U\) acts freely by automorphisms on a connected graph \(Y\), then there is a partition \(\mathcal P\) of \(V(Y)\) into nonempty connected sets such that \(Y_{\mathcal P}\) is a connected Cayley graph of \(U\).
\end{theorem}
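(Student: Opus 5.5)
We prove Babai's contraction lemma by choosing a connected fundamental domain for the free action and contracting all of its translates. The plan is to find a connected set \(T\subseteq V(Y)\) meeting every \(U\)-orbit in exactly one vertex. Then the translates \(\{uT:u\in U\}\) partition \(V(Y)\): freeness gives \(|uT\cap Uv|=1\) for each orbit, and distinct \(u\) give disjoint translates because \(T\) contains only one point of each orbit and the action is free. Each \(uT\) is connected, since \(u\) acts by an automorphism. We take \(\mathcal P=\{uT:u\in U\}\) and identify \(uT\) with \(u\); this is a bijection \(U\to\mathcal P\), again by freeness.

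\emph{Constructing \(T\).} We grow \(T\) greedily. Start with a single vertex. While some orbit is not met by \(T\), connectivity of \(Y\) gives an edge from the union \(UT\) of the orbits already met to a vertex \(w\) in a new orbit. Say this edge joins \(w\) to \(ut\) with \(t\in T\). Then \(u^{-1}w\) is adjacent to \(t\) and lies in the same new orbit, so we add it to \(T\). This keeps \(T\) connected and meeting each orbit at most once. When all orbits are met, \(T\) is the desired transversal; it is finite since \(Y\) is finite.

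\emph{Identifying the Cayley structure.} Let \(S=\{s\in U\setminus\{1\}: \text{some edge joins } T \text{ and } sT\}\). Applying \(u\), an edge between \(uT\) and \(vT\) exists if and only if one exists between \(T\) and \(u^{-1}vT\), that is, if and only if \(u^{-1}v\in S\). So \(Y_{\mathcal P}\) is exactly the graph with \(u\sim us\) for \(s\in S\). The set \(S\) is symmetric: applying \(s^{-1}\) to an edge between \(T\) and \(sT\) gives an edge between \(s^{-1}T\) and \(T\). Finally, \(Y_{\mathcal P}\) is connected because contracting the parts of the connected graph \(Y\) preserves connectivity, which is the same argument as in Lemma~\ref{lem:orbits}. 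Hence \(S\) generates \(U\), and \(Y_{\mathcal P}=\Cay(U,S)\).

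The main point needing care is the greedy construction of a connected transversal, specifically the translation step showing that a new orbit can always be reached by a neighbour of \(T\) itself, not merely of \(UT\). The remaining identifications are routine consequences of freeness.
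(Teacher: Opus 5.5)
Your proof is correct and complete. The greedy growth of \(T\) amounts to lifting a spanning tree of the orbit quotient \(Y/U\), and contracting the translates \(uT\) of a connected transversal is the standard argument for Babai's lemma (as in Seifter and Woess). The paper gives no proof of its own and simply cites Babai, so there is no competing route to compare.

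One small correction of attribution: \(|uT\cap Uv|=1\) follows only from \(T\) being a transversal and \(Uv\) being \(U\)-invariant. Freeness is what you actually need for distinct translates to be disjoint and for \(u\mapsto uT\) to be injective.
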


\begin{remark}[Paths under contraction]\label{rem:contractionpaths}
Let \(\mathcal P\) be a partition of the vertex set of a nonempty graph \(Y\) into nonempty connected sets. Every path \(B_1,\ldots,B_k\) in \(Y_{\mathcal P}\) gives a path in \(Y\) with at least \(k\) vertices. Consequently, \(p(Y)\ge p(Y_{\mathcal P})\).

For \(k=1\), choose any vertex of \(B_1\). For \(k\ge2\), choose an edge \(x_i y_i\) of \(Y\) with \(x_i\in B_i\) and \(y_i\in B_{i+1}\) for each \(1\le i<k\). For each \(2\le i<k\), connectedness of \(B_i\) gives a simple path from \(y_{i-1}\) to \(x_i\) within \(B_i\), using a single vertex if the endpoints coincide. Concatenating these paths with the chosen edges gives a path from \(x_1\) to \(y_{k-1}\). This path is simple because the parts \(B_1,\ldots,B_k\) are distinct and disjoint, and the path used within each part is simple. It visits every \(B_i\), so has at least \(k\) vertices.
\end{remark}

\subsection{Completing the proof}\label{sec:inputs}
We use the following consequence of the expansion bound of Babai and Szegedy~\cite{BabaiSzegedy} and a standard path criterion~\cite[Proposition~7.1]{Krivelevich}; see also~\cite[Lemmas~2.2 and~2.4]{BHMSY}.

\begin{lemma}[A standard diameter bound]\label{lem:diameter}
Every finite connected vertex-transitive graph \(X\) of order \(n\) and diameter \(D\) satisfies
\begin{equation}\label{eq:diameter}
 p(X)\ge\frac{n}{2(D+1)}.
\end{equation}
\end{lemma}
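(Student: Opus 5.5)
We plan to combine the Babai--Szegedy vertex-expansion estimate with the standard DFS-based path criterion, as indicated in Section~\ref{sec:overviewsingle}. First we dispose of trivial cases: if \(n\le 2(D+1)\), the right side of~\eqref{eq:diameter} is at most \(1\), and a single vertex is a path, so we may assume \(n>2(D+1)\). Next we record the expansion input. By Babai and Szegedy~\cite{BabaiSzegedy}, for every \(A\subseteq V(X)\) with \(1\le|A|\le n/2\), the outer vertex boundary \(N(A)\setminus A\) has at least \(|A|/(D+1)\) vertices.

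The path criterion we use is the following DFS statement from~\cite[Proposition~7.1]{Krivelevich}. Run depth-first search on \(X\), keeping the set \(U\) of unvisited vertices, the stack \(T\) (which always spans a path in \(X\)), and the set \(W\) of finished vertices. Each step moves one vertex from \(U\) to \(T\) or from \(T\) to \(W\), and there is never an edge between \(W\) and \(U\). At the moment when \(|W|=|U|\) or \(|W|=|U|\pm1\), we will have \(|T|=n-|W|-|U|\). Thus, if \(|T|\) were small, we would get \(|W|\approx|U|\approx (n-|T|)/2\).

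We then make the choice concrete. Stop the DFS at the first moment when \(|W|\ge\lceil (n-|T|)/2\rceil\)-type balance occurs. More simply, we follow Krivelevich's argument: at the moment \(|W|=\lfloor (n-|T|)/2\rfloor\), take \(A=W\), which has size at most \(n/2\). Every neighbour of \(A\) outside \(A\) lies in \(T\), because there are no \(W\)--\(U\) edges. Hence
\[
 |T|\ge |N(W)\setminus W|\ge\frac{|W|}{D+1}.
\]
If \(|T|<n/(2(D+1))\), then \(|W|\ge(n-|T|)/2-1\) is close to \(n/2\). This yields \(|T|\ge (n-|T|-2)/(2(D+1))\), and rearranging gives \(|T|\gtrsim n/(2(D+1)+1)\).

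The main obstacle is exactly this bookkeeping: obtaining the constant \(1/(2(D+1))\) rather than something slightly worse. To get a clean bound, we expect instead to apply expansion to whichever of \(W\) and \(U\) is smaller. If the DFS reaches a moment with \(|W|=|U|\), then the smaller side has size \((n-|T|)/2\le n/2\). Its boundary lies in \(T\), so \(|T|(D+1)\ge (n-|T|)/2\), which gives \(|T|\ge n/(2D+3)\).

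To reach \(n/(2(D+1))\) exactly, we would use the sharper form of the Babai--Szegedy bound. That bound gives boundary at least \(2|A|/(2D+1)\) (or a similar form) for \(|A|\le n/2\), and with it the computation closes; this is what we would check in~\cite{BabaiSzegedy}. Such a moment with \(|W|=|U|\) (or differing by one, handled with the floor) exists by discrete continuity, since \(|U|-|W|\) decreases from \(n\) to \(-n\) in steps of one. Finally, \(T\) spans a path, which gives~\eqref{eq:diameter}.
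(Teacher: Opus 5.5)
\textbf{Gap: the proposal does not reach the stated constant.} Your depth-first search setup and its invariants are correct. These are: the stack spans a path, there are no \(W\)--\(U\) edges, and \(|U|-|W|\) drops by exactly one per step.

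With the expansion bound you actually state, \(|N(A)\setminus A|\ge |A|/(D+1)\) for \(1\le|A|\le n/2\), both of your stopping rules give only about \(|T|\ge n/(2D+3)\). This is also the form the paper uses. The loss comes from stopping at the balanced moment \(|W|=|U|=(n-|T|)/2\). There the set you expand has already been shrunk by \(|T|/2\), and this costs you the constant.

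To recover the constant, you appeal to a sharper bound \(2|A|/(2D+1)\) ``or a similar form'' that you have not verified. Such a bound would indeed close your computation exactly, since \((2D+1)|T|\ge n-|T|\) gives \(|T|\ge n/(2D+2)\). As written, however, your final step depends on an input you have not pinned down, so the proof is incomplete.

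\textbf{The missing idea: a better stopping time.} With the right stopping time, the weaker expansion bound suffices.
\begin{itemize}
\item For \(n\ge2\), stop the search at the first moment when \(|W|=k:=\lfloor n/2\rfloor\). Such a moment exists because \(|W|\) grows by at most one per step, from \(0\) to \(n\).
\item Now \(W\) itself is a set of exactly \(k\le n/2\) vertices, and its outer boundary lies in \(T\). Hence \(|T|\ge k/(D+1)>0\).
\item The vertex just moved into \(W\) was the top of the stack, so it is adjacent to the current top. Adding it to \(T\) gives a path with at least \(k/(D+1)+1\ge (k+1)/(D+1)>n/(2(D+1))\) vertices.
\item The extra vertex matters when \(n\) is odd. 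Without it you get only \((n-1)/(2(D+1))\).
\end{itemize}
This is exactly the paper's proof. It applies Krivelevich's Proposition~7.1 with \(k=\lfloor n/2\rfloor\) and \(\ell=\lceil k/(D+1)\rceil\), which gives a path with at least \(\ell\) edges, and hence at least \(\ell+1\) vertices.
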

\begin{proof}
The case \(n=1\) is immediate. For \(n\ge2\), put \(k=\lfloor n/2\rfloor\). By Babai and Szegedy's bound, every set of \(k\) vertices has at least \(k/(D+1)\) neighbours outside it. The path criterion~\cite[Proposition~7.1]{Krivelevich} states that, for positive integers \(k,\ell\), a graph with more than \(k\) vertices has a path with at least \(\ell\) edges whenever every set of \(k\) vertices has at least \(\ell\) neighbours outside it. Taking \(\ell=\lceil k/(D+1)\rceil\) therefore gives a path with at least \(\ell\) edges. Consequently,
\[
 p(X)\ge\left\lceil\frac{k}{D+1}\right\rceil+1
 \ge\frac{k+1}{D+1}>\frac{n}{2(D+1)}.\qedhere
\]
\end{proof}

\begin{proof}[Proof of Theorem~\ref{thm:main}]
It suffices to consider \(0<\eps<1\). The parameter \(\lambda\) controls the orbit diameters, while \(\eta\) is the exponent loss in the Cayley-graph bound. Set \(\lambda=\eps/8\) and \(\eta=\eps/4\), and let \(r,c,I\) be the bounds supplied by Theorem~\ref{thm:TT}. All thresholds below depend only on \(\eps\).

Let \(X\) be a connected vertex-transitive graph of sufficiently large order \(n\). We first prove
\begin{equation}\label{eq:mainpathbound}
 p(X)\ge n^{1-\eps}.
\end{equation}
Apply Theorem~\ref{thm:TT} to obtain \(H\normal\Aut(X)\) and \(Q=X/H\). Write \(q=|V(Q)|\) and \(s=n/q\) for the common orbit size, which is well-defined by Lemma~\ref{lem:orbits}, and put
\[
 R=n^\lambda+1\le2n^\lambda.
\]
Each \(H\)-orbit has diameter at most \(R-1\) in \(X\).

\smallskip
\noindent\emph{A single orbit: \(q=1\).}
Here \(\diam(X)\le n^\lambda\), so Lemma~\ref{lem:diameter} gives
\[
 p(X)\ge\frac{n}{2(n^\lambda+1)}
 \ge\frac{n^{1-\eps/8}}4\ge n^{1-\eps}
\]
for sufficiently large \(n\).

\smallskip
\noindent\emph{Large orbits: \(q\ge2\) and \(s\ge n^{\eps/2}\).}
For sufficiently large \(n\),
\[
 s\ge n^{\eps/2}\ge768n^{\eps/4}\ge192R^2.
\]
Lemma~\ref{lem:largefibres} therefore gives
\[
 p(X)\ge\frac{n}{1000R^2}
 \ge\frac{n^{1-\eps/4}}{4000}\ge n^{1-\eps}.
\]

\smallskip
\noindent\emph{Small orbits: \(q\ge2\) and \(s<n^{\eps/2}\).}
Now \(q>n^{1-\eps/2}\). Let \(N\) be the nilpotent subgroup supplied by Theorem~\ref{thm:TT}, and put \(m=|N|\). The group \(N\) has at most \(I\) orbits on \(Q\), each of size \(m\), by freeness. Thus \(m\ge q/I\).

Theorem~\ref{thm:BabaiContraction} gives a connected Cayley graph \(Y\) of \(N\) by contracting connected parts of \(Q\). Theorem~\ref{thm:nilpotent} gives \(p(Y)\ge m^{1-\eta}\) for sufficiently large \(n\), since \(m\ge q/I\to\infty\). Remark~\ref{rem:contractionpaths} and the orbit lifting in Lemma~\ref{lem:orbits} now yield
\[
 p(X)\ge p(Q)\ge p(Y)\ge m^{1-\eta}
 \ge\frac{n^{(1-\eps/2)(1-\eps/4)}}{I^{1-\eps/4}}
 \ge n^{1-\eps}.
\]
The last inequality holds for sufficiently large \(n\), because \(I\) depends only on \(\eps\) and \((1-\eps/2)(1-\eps/4)>1-\eps\). This proves~\eqref{eq:mainpathbound}.

For cycles, apply~\eqref{eq:mainpathbound} with \(\eps/2\) in place of \(\eps\). For \(n\ge4\), a connected vertex-transitive graph has degree at least two. If its degree is two, it is a cycle. Otherwise, Watkins's theorem~\cite{Watkins} makes it 3-connected, and the theorem of Bondy and Locke~\cite{BL} gives
\[
 c(X)\ge\frac25\bigl(p(X)-1\bigr)
 \ge\frac25\bigl(n^{1-\eps/2}-1\bigr)
 \ge\frac15n^{1-\eps/2}\ge n^{1-\eps}
\]
for sufficiently large \(n\).
\end{proof}

\section*{Statement of AI use}
The starting point of this work was the expander decomposition lemma of Letzter, Methuku and Sudakov~\cite[Lemma~4.1]{LMS}, which partitions almost all vertices of the host graph into parts with useful regularity and expansion properties. It does not, however, ensure that these parts are vertex-transitive. In 2025, assuming a partition into sufficiently large vertex-transitive parts, the authors developed the idea of using successive traversals of a spanning tree in the quotient, each traversal using every edge once in each direction, together with the Lov\'asz local lemma to control intersections between random short paths and extract a long simple path. When the parts were small, however, the authors' ideas for lifting paths incurred substantial losses.

In July 2026, when the authors asked OpenAI's GPT-5.6 Sol whether a decomposition into vertex-transitive parts is possible, it directed them to the structure theorem of Tessera and Tointon~\cite{TT}. This supplies precisely such a partition, with several additional useful properties. Combining this partition with a diameter bound and their spanning-tree and local-lemma argument, the authors quickly resolved the cases of a single orbit and of large orbits, described in Sections~\ref{sec:overviewsingle} and~\ref{sec:overviewroutes} of the overview.

For the small-orbit case, after the authors presented their earlier ideas for lifting paths and the loss in that argument, GPT pointed them to Babai's contraction lemma~\cite{BabaiContraction}. This reduced the problem to finding a long path in a connected Cayley graph of a nilpotent group with boundedly many generators and bounded nilpotency class. Combining their earlier ideas for lifting long paths with an argument used for proving the classical Factor Group Lemma, the authors obtained the bound in Lemma~\ref{lem:shortcentral}, showing that lifting through a cyclic central quotient retains at least \(1/2^\ell\) of the proportion of vertices covered in the quotient, where the central generator is a product of at most \(\ell\) prescribed generators. However, the authors did not know how to ensure that \(\ell\) is sufficiently small. GPT supplied the required group-theoretic arguments for controlling both the number and the word lengths of the required generators (Lemma~\ref{lem:shortgenerators}), keeping the total loss to \(m^{o(1)}\), where \(m\) is the group order. This provided the key remaining step for completing the small-orbit case. The authors also used AI assistance to polish and rewrite parts of this manuscript and take full responsibility for the final content.

\section*{Acknowledgements}
The authors are grateful to Matija Buci\'c, Shoham Letzter and Alp M\"uyesser for several insightful discussions on the topic of this paper.

\end{document}